\newcommand{\N}{\mathbb{N}}
\newcommand{\Py}{\mathbb{P}}
\newcommand{\Ev}{\mathbb{E}}
\newcommand{\V}{\mathbb{V}}
\newcommand{\cC}{\mathcal{C}}
\newcommand{\cQ}{\mathcal{Q}}
\newcommand{\cO}{\mathcal{O}}
\newcommand{\de}{\partial}
\newcommand{\Cov}{\textnormal{Cov}}
\newcommand{\MSE}{\text{MSE}}
\newcommand{\1}{\mathbbm{1}}
\newcommand{\bs}{\boldsymbol}
\newcommand{\diff}{\textnormal{diff}}
\newcommand{\td}{\text{d}}
\newtheorem{theorem}{Theorem}[section]
\newtheorem{proposition}[theorem]{Proposition}
\newtheorem{corollary}[theorem]{Corollary}
\theoremstyle{remark}
\begin{document}

\title{Continuous Level Monte Carlo and Sample-Adaptive Model Hierarchies}
\author{Gianluca Detommaso${}^{1,2}$, Tim Dodwell${}^3$ and Rob Scheichl${}^2$}
\date{}
\maketitle

\begin{center}
\begin{footnotesize}

\vspace{-0.75cm}

\noindent
${}^1$ The Alan Turing Institute, London, NW1 2DB, UK. \ Email: {\tt gdetommaso@turing.ac.uk}

\vspace{0.1cm}

\noindent
${}^2$ Department of Mathematical Sciences, University of Bath, Bath, BA2
7AY, UK. 

\vspace{0.1cm}

\noindent
${}^3$ College of Engineering, Mathematics and Physical Sciences,
University of Exeter, Exeter, EX4 4PY, UK. 

\vspace{0.1cm}

\end{footnotesize}
\end{center}


\begin{abstract}
In this paper, we present a generalisation of the 
Multilevel Monte Carlo (MLMC) method to a setting where the level
parameter is a continuous variable.
This Continuous Level Monte Carlo
(CLMC) estimator provides a natural framework in PDE applications 
to adapt the model hierarchy to each sample. In addition, it can be made 
unbiased with respect to the expected value of the true quantity of
interest provided the quantity of interest converges sufficiently
fast. The practical implementation of the CLMC
estimator is based on interpolating actual evaluations of the quantity of
interest at a finite number of resolutions. As our
new level parameter,
we use the logarithm of a goal-oriented finite element error estimator
for the accuracy of the quantity of interest. We prove the
unbiasedness, as well as a complexity
theorem that shows the same rate of complexity for CLMC as for
MLMC. Finally, we
provide some numerical evidence to support our theoretical results, by
successfully testing CLMC on a standard PDE test problem. The
numerical experiments demonstrate clear gains for sample-wise adaptive
refinement strategies over uniform
refinements.
\end{abstract}

\section{Introduction}
No matter whether epistemic or aleatoric, known unknown or unknown
unknown, uncertainty plays a fundamental role in any real life
situation. Its quantification is becoming an object of interest for
ever more complex problems, where accurate solutions require huge
computational costs.
A lot of methods have been proposed in the last decade that aim to reduce this cost without affecting the accuracy. Among others, multilevel techniques conquered the scene arising in a multitude of algorithms, all following the pioneering work on \textit{multilevel Monte Carlo} (MLMC) by Giles \cite{giles2008multilevel} and the earlier paper by Heinrich \cite{heinrich2001multilevel} (see also \cite{elfverson2016multilevel, cliffe2011multilevel} and references therein). In general, multilevel techniques aim to accelerate inference by exploiting a hierarchy of models with different levels of accuracy. By combining estimates from all the models in a telescoping sum, the computational cost is shifted towards the bottom (cheap and inaccurate) end of the hierarchy, while maintaining the accuracy of the top (expensive and high resolution) end.

Since the initial work on MLMC, several techniques have been employed to exploit model structures even further with considerable savings in computational cost. An important step forward was the introduction of \textit{adaptive multilevel Monte Carlo} (AMLMC) \cite{hoel2012adaptive}, 
where error estimates and adaptive refinement strategies are exploited
to increase the accuracy only where needed (see also 
\cite{elfverson2016multilevel,eigel2016adaptive,babushkina2016adaptive}
in the context of PDEs). In contrast to the majority of the literature
on MLMC, which is based on uniform refinements, AMLMC is able to deal
with problems with very localised sample-dependent noise or
quantities of interest, avoiding excessive computational cost by
refining the models only where necessary and, in general, differently
for each sample. 

A second important step forward was the introduction of an MLMC
estimator that is unbiased with respect to the real quantity of
interest \cite{rhee2015unbiased} (see also \cite{mcleish2011general,
  vihola2015unbiased}). In most problems of consideration, the
quantity of interest is a functional of the solution of an
inaccessible, infinite-dimensional model. In such cases, standard MLMC
is only able to provide an estimator that is unbiased with respect to
an approximation of the real quantity of interest. Having an unbiased
estimator for the real quantity of interest is often of great
practical interest, especially if the estimator is used for further
predictions. Furthermore, the bias error is typically harder to estimate than the
sampling error, making it easier to avoid unnecessary computational 
effort with an unbiased estimator.

In this paper, we present a generalisation of MLMC to a continuous
framework that we denote \textit{continuous level Monte Carlo} (CLMC),
where the underlying hierarchical structure is considered to be
continuous rather than a finite sequence of discrete instances. The
level parameter $\ell$ is assumed to be a real number rather than an
integer, giving access to standard tools from Calculus, such as the
integral or the derivative with respect to the level. Although this
might sound just like a conceptual generalisation, we will
interestingly see how the continuous framework also allows deeper
understanding and different perspectives. As a first fact, it
highlights a link with tools from probability theory, since the
continuous sequence of approximations can now be interpreted as a
continuous stochastic process over the level of resolution. In this
framework, the classic telescoping sum of MLMC straightforwardly
becomes a simplified version of Dynkin's formula
\cite{kesendal2000stochastic}, or more simply the Fundamental Theorem
of Calculus. As allowed in Dynkin's formula, the finest level $L$ of
resolution can be chosen as a stopping time random variable, which
stops the refining procedure differently for each sample according to
some probability distribution over $L$. We will see that there is a
simple probability distribution over $L$ corresponding to the optimal
decaying sequence of the number of samples in MLMC and the choice of
this distribution is not very sensitive to an accurate estimation of
the convergence rates and of the cost per sample.

The main result of the paper is a continuous version of the complexity theorem for MLMC. This provides two main contributions:
\begin{itemize}
\item it introduces a CLMC estimator that, under standard assumptions, satisfies the same computational cost rate as the one in MLMC;
\item it proves that the CLMC estimator can potentially be unbiased, \textit{but} the unbiased version has finite computational cost exclusively when the variance decays faster than the cost per sample grows.
\end{itemize}
Among potential applications, the continuous level framework finds his
practical utility for sample-dependent hierarchical refinements: when
the refinement levels depend on samples instead of being fixed, it is
more natural to think of them in a continuous fashion, as the
resolution of a particular model can fall anywhere on the real
line. This is a typical situation in AMLMC. Indeed, the resolution
level is usually interpreted as the logarithm of the error of the
numerical model, therefore intrinsically continuous. Moreover, the
error is sample-dependent, hence each sample will hit its own sequence
of level refinements. As AMLMC involves taking sample averages of
quantities of interests at some prescribed levels, approximations have
to be made that may lead to slight inefficiencies especially when the
improvement in the approximation error in each adaptation step varies 
strongly (see \cite{babushkina2016adaptive}).
 
Here, we develop practical CLMC algorithms that are easy to implement
and do not require any such approximation. As we can arbitrarily
choose the nature of the quantity of interest between the actual
evaluations, to obtain a quantity of interest function that is
continuous over the levels we simply interpolate
the calculated values, whence we can work out a practical
formula. Note that the practical formula can also be implemented for
the unbiased version
of the CLMC estimator. Finally, we provide some numerical experiments 
showing the CLMC algorithm in action for a standard two-dimensional
model problem where the adaptivity and the sample-dependent hierarchies are
shown to leading to significant computational savings.

The structure of the paper is as follows. In Section \ref{sec:CLMC},
we give a short background of Monte Carlo and MLMC; we present the
main CLMC idea; we introduce the CLMC estimator and show the
unbiasedness property; we state the CLMC complexity theorem; we
provide a corollary showing when the estimator that provides the
optimal cost is unbiased with respect to the real quantity of interest.
In Section \ref{sec:practical}, we propose a practical CLMC algorithm
for sample-based adaptive hierarchical refinement; we discuss the
special case of uniform refinement and the similarities with MLMC and
show the link between the distribution of the finest level and the
sequence of number of samples; we finish the section with
some proposals for other possible implementations and approaches.
Finally, Section \ref{sec:AMLMC} introduces the PDE model problem and
the adaptive finite element hierarchy for them, as well as presenting
and discussing the numerical experiments. We finish the paper with
some conclusions and ideas for future work in Section
\ref{sec:conclusion}. The detailed proof of the complexity theorem, as
well as some details about the goal-oriented error estimator are
delegated to the appendices.

\section{Continuous level Monte Carlo}\label{sec:CLMC}

\subsection{Background: Monte Carlo and Multilevel Monte Carlo}
Suppose one is interested in estimating the expected value $\Ev[\cQ]$ of some (inaccessible) quantity of interest $\cQ$, for simplicity assumed to be scalar. In uncertainty quantification (UQ), $\cQ$ is typically a functional of the solution of some random partial differential equation (PDE), where the randomness can lie anywhere, e.g. within the coefficients, the source, the boundary conditions or the shape of the domain.

In general, the solution of a PDE can not be calculated exactly and it
has to be approximated numerically, up to some desirable resolution
level $L$. Let us call $Q_L$ such an approximation and assume that
$Q_L\to \cQ$ almost surely (a.s.) for $L\to +\infty$. Then, for any desired tolerance $\varepsilon > 0$, there exists a fine enough resolution $L$, such that $|\Ev[\cQ-Q_L]| \le \varepsilon$, and we can focus on finding good algorithms to estimate $\Ev[Q_L]$ to the same accuracy. There are two main issues here.
\begin{enumerate}
\item If the underlying probability distribution is continuous and high-dimensional, which is common in UQ applications, it can be extremely expensive to approximate the expected value with standard quadrature methods.
\item If the resolution $L$ required to compute the PDE solution with sufficient accuracy is high, then computing just one sample of $Q_L$ will be expensive and the number of samples that can be computed on level $L$ in a reasonable time is limited.
\end{enumerate}
A standard remedy for Issue 1 is the use of \textit{Monte Carlo} (MC) methods \cite{robert2004monte}. Indeed, the rate of converge of MC estimators is independent of the dimension of the integral and it is extremely easy to implement: given $N$ independent samples $\big(Q_L^{(k)}\big)_{k=1}^{N}$ of $Q_L$, distributed according to the underlying probability distribution, the expected value can be estimated as
\begin{equation}\label{MCest}
\Ev[Q_L]\approx \frac{1}{N}\sum_{k=1}^NQ_L^{(k)}.
\end{equation}

Whilst the right-hand-side in \eqref{MCest} is an unbiased estimator
of $\Ev[Q_L]$, unfortunately it converges very slow, especially when
$L$ is large, since $\cO(\varepsilon^{-2})$ samples are required to
reduce the sampling error to a given accuracy $\varepsilon$,
i.e. $|\Ev[\cQ-Q_L]|\le\varepsilon$. As every sample requires an
expensive PDE solve, the computational cost quickly becomes infeasible
for small $\varepsilon$.

An acceleration technique suggested for \eqref{MCest} is the {\em multilevel Monte Carlo} (MLMC) method \cite{heinrich2001multilevel,giles2008multilevel}. It exploits a hierarchy of approximations $Q_0,Q_1,\dots,Q_L$ of $\cQ$ at different resolutions, starting with a coarse and cheap approximation $Q_0$, and going up to the fine and expensive approximation $Q_L$. In contrast to the standard MC estimator in \eqref{MCest}, which directly estimates $\Ev[Q_L]$ by sampling $Q_L$, MLMC combines samples from the sequence of approximations $(Q_\ell)_{\ell=0}^L$ to produce an overall cheaper estimator. To this purpose, the approximations are combined into the telescoping sum   
\begin{equation}\label{tel_sum}
\Ev[Q_L-Q_0]=\sum_{\ell=1}^L\Ev[Q_\ell-Q_{\ell-1}],
\end{equation}
and then each term in the sum on the right-hand-side is estimated with Monte Carlo:
\begin{equation}\label{MLMCest}
\Ev[Q_\ell-Q_{\ell-1}] \approx \frac{1}{N_\ell}\sum_{k=1}^{N_\ell} \left(Q_\ell^{(k)}-Q_{\ell-1}^{(k)}\right).
\end{equation}
To obtain an estimator for $\Ev[Q_L]$ it suffices to add a Monte Carlo estimator for $\Ev[Q_0]$.
 
Crucially, the consecutive approximations $Q_{\ell-1}^{(k)}$ and $Q_\ell^{(k)}$ in the difference $Q_\ell^{(k)}-Q_{\ell-1}^{(k)}$ come from the same sample $k$. This means that they are strongly positively correlated, and the variance of the difference is heavily reduced:
\begin{equation}\label{ML_var_red}
\V[Q_\ell-Q_{\ell-1}] = \V[Q_{\ell-1}]+\V[Q_\ell]-2\Cov(Q_{\ell-1},Q_\ell) \ll \V[Q_{\ell-1}]+\V[Q_\ell].
\end{equation}
As $Q_\ell\to \cQ$ a.s. for $\ell\to+\infty$, we also have $Q_\ell - Q_{\ell-1} \to 0$, so that the covariance, and in turn the variance reduction, increases as $\ell\to+\infty$. As a consequence, the required number of samples $N_\ell$ at level $\ell$ can be chosen to decrease monotonically with increasing $\ell$, so that only very few expensive samples on level $L$ are needed. The majority of samples and therefore the computational cost will be shifted to the coarser levels.

This reduction in computational complexity can be quantified rigorously, at least asymptotically as the tolerance $\varepsilon \to 0$. The complexity theorems in \cite{giles2008multilevel, cliffe2011multilevel} show that the overall computational cost for the MLMC algorithm can be up to a factor $\cO(\varepsilon^{2})$ smaller than the cost of the MC estimator in \eqref{MCest}. We will return to this and give more details in Section \ref{sec_compl_thm}. 

\subsection{Continuous Level Monte Carlo: the main idea}

In this section, we introduce the {\em continuous level Monte Carlo} (CLMC) idea. As we have seen above, MLMC exploits a discrete sequence of approximations $(Q_\ell)_{\ell=0}^L$ of $\cQ$. We now extend this to a continuous family of approximations $(Q(\ell))_{\ell\ge 0}$ of $\cQ$. In other words, $(Q(\ell))_{\ell\ge 0}$ is a stochastic process of approximations over the continuous level of resolution $\ell$.

Let $L$ be assumed to be a random variable with finite expectation denoting the (random) finest level of resolution, independent from the stochastic process $(Q(\ell))_{\ell\ge 0}$. Also, let $L_{\max}\in [0,\infty]$ be a deterministic constant that we introduce for reasons that will become clearer later. We can write down the following formula:
\begin{equation}\label{CLtel_sum}
\Ev[Q(L\wedge L_{\max}) - Q(0)] = \Ev\left[\int_0^{L\wedge L_{\max}} \frac{\td Q(\ell)}{\td\ell}\,\td\ell\right]\,.
\end{equation}
For the formula in \eqref{CLtel_sum} to be well-posed, we need to assume that $Q\in W^{1,1}(0,L_{\max})$ as a function of $\ell$, where $W^{1,1}(0,L_{\max})$ is a Sobolev space containing functions over $\ell\in(0,L_{\max})$ such that the functions and their weak first derivatives have finite $L^1$ norm. Note that for simplicity we are choosing 0 as coarsest level, but this can of course be generalised.

If we assume $L$ to be a deterministic variable, the expectation in \eqref{CLtel_sum} can be pulled inside the integral and the derivative, so that equation \eqref{CLtel_sum} reduces to the Fundamental Theorem of Calculus, which guarantees the identity. However, more generally, equation \eqref{CLtel_sum} can be recovered as a particular case of Dynkin's Formula \cite{kesendal2000stochastic}, where $L$ is interpreted as a finite stopping time.

\subsection{The CLMC estimator}
Let us assume $L$ to be a random variable independent of the whole stochastic process $(Q(\ell))_{\ell\ge 0}$. We can then define the \textit{continuous level Monte Carlo (CLMC) estimator}
\begin{equation}\label{CLest}
\widehat{Q}^{\text{CLMC}}_{L_{\max}} := \frac{1}{N}\sum_{k = 1}^{N}\int_0^{L_{\max}}\frac{1}{\Py(L\ge \ell)}\left(\frac{\td Q}{\td\ell}\right)^{(k)}(\ell)\,\1_{[0,L^{(k)}]}(\ell)\,\td\ell,
\end{equation}
where the superscript $(k)$ denotes the $k$-th realisation of the
respective random variable and $N$ is the total number of samples. For
simplicity of presentation, the estimator
$\widehat{Q}^{\text{CLMC}}_{L_{\max}}$ is defined as an estimator for
$\Ev[Q(L_{\max})-Q(0)]$, as we see in Proposition
\ref{prop_unbiased}. As in standard MLMC, it suffices to add an
unbiased estimator for $\Ev[Q(0)]$ to obtain an estimator for $\Ev[Q(L_{\max})]$.

A reader familiar with the MLMC literature might be puzzled by the
estimator in \eqref{CLest}, where we use the same number of samples
$N$ for each level $\ell$. However, note that, for each sample $k$,
the integrand in \eqref{CLest} will only be non-zero up to the random
realisation $L^{(k)}$ of $L$, and therefore in practice we do not need
to evaluate $Q(\ell)$ beyond level $L^{(k)}$.

We are now ready to show that the CLMC estimator is unbiased.

\begin{proposition}\label{prop_unbiased}
The CLMC estimator \eqref{CLest} is an unbiased estimator for $\Ev[Q(L_{\max})-Q(0)]$, i.e.
\[
\Ev[\widehat{Q}^{\textnormal{CLMC}}_{L_{\max}}] = \Ev[Q(L_{\max})-Q(0)]\,.
\]
\end{proposition}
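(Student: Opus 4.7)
The plan is to verify that the expectation of the estimator \eqref{CLest} collapses to $\Ev[Q(L_{\max})-Q(0)]$ via three ingredients: linearity of expectation to reduce to a single summand, Fubini's theorem to interchange $\Ev$ and the $\ell$-integral, and the independence of $L$ from the stochastic process $(Q(\ell))_{\ell\ge 0}$ to make the weight $1/\Py(L\ge\ell)$ cancel against the indicator.

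First I would note that, by linearity and the fact that the $N$ terms in \eqref{CLest} are i.i.d.\ copies, it suffices to prove that for a single realisation (dropping the superscript $(k)$)
\[
\Ev\!\left[\int_0^{L_{\max}}\frac{1}{\Py(L\ge\ell)}\,\frac{\td Q}{\td\ell}(\ell)\,\1_{[0,L]}(\ell)\,\td\ell\right]=\Ev[Q(L_{\max})-Q(0)].
\]
I would then invoke Fubini's theorem to swap the expectation with the $\td\ell$-integral. This is justified by the assumption $Q\in W^{1,1}(0,L_{\max})$ (a.s.), which gives the $\td\ell$-integrability of $\td Q/\td\ell$, together with the implicit assumption that $\Py(L\ge\ell)>0$ for $\ell\in[0,L_{\max}]$, so that $1/\Py(L\ge\ell)$ is a bounded deterministic weight on the interval.

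After swapping, the key observation is that $\1_{[0,L]}(\ell)=\1_{\{L\ge\ell\}}$, so the integrand under the expectation reads $\frac{\td Q}{\td\ell}(\ell)\,\1_{\{L\ge\ell\}}$. Using independence between $L$ and the process $(Q(\ell))_{\ell\ge 0}$, this factorises as
\[
\Ev\!\left[\frac{\td Q}{\td\ell}(\ell)\,\1_{\{L\ge\ell\}}\right]=\Ev\!\left[\frac{\td Q}{\td\ell}(\ell)\right]\Py(L\ge\ell),
\]
and the prefactor $1/\Py(L\ge\ell)$ cancels. What remains is $\int_0^{L_{\max}}\Ev[\td Q/\td\ell(\ell)]\,\td\ell$, and a final application of Fubini together with the Fundamental Theorem of Calculus for $W^{1,1}$ functions yields $\Ev[Q(L_{\max})-Q(0)]$, which is exactly equation \eqref{CLtel_sum} with $L$ deterministically equal to $L_{\max}$.

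The only genuine subtlety is justifying the two Fubini swaps: one needs enough integrability that $\Ev\!\int_0^{L_{\max}}|\td Q/\td\ell(\ell)|\,\td\ell/\Py(L\ge\ell)<\infty$. I would address this by combining the $W^{1,1}$-regularity of the process with a standing hypothesis that $\Py(L\ge\ell)$ stays bounded away from zero on $[0,L_{\max}]$, which is natural for the intended applications where $L_{\max}$ lies inside the support of $L$ (and is in fact ensured when $L_{\max}<\infty$ in Proposition \ref{prop_unbiased}). Everything else is bookkeeping.
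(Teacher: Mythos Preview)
Your proposal is correct and follows essentially the same route as the paper's own proof: linearity reduces to one summand, expectation is swapped with the $\ell$-integral, independence of $L$ from $(Q(\ell))_{\ell\ge 0}$ factors the integrand so that $\Py(L\ge\ell)$ cancels the weight, and the Fundamental Theorem of Calculus finishes. The only difference is that you are more explicit about the Fubini justifications, which the paper leaves implicit.
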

\begin{proof}
By exploiting the independence of $L$ from $(Q(\ell))_{\ell\ge 0}$, we have
\begin{align*}
\Ev\left[\widehat{Q}^{\text{CLMC}}_{L_{\max}}\right] &= \Ev\left[\frac{1}{N}\sum_{k = 1}^{N}\int_0^{{L_{\max}}}\frac{1}{\Py(L\ge \ell)}\left(\frac{\td Q(\ell)}{\td\ell}\right)^{(k)}\1_{[0,L^{(k)}]}(\ell)\,\td\ell\right]\\
&= \int_0^{{L_{\max}}}\frac{1}{\Py(L\ge \ell)}\Ev\left[\frac{\td Q(\ell)}{\td\ell}\right]\Ev\left[\1_{[0,L]}(\ell)\right]\,\td\ell\\
&= \int_0^{{L_{\max}}}\frac{1}{\Py(L\ge \ell)}\Ev\left[\frac{\td Q}{\td\ell}(\ell)\right]\Py(L\ge\ell)\,\td\ell\\
&= \int_0^{L_{\max}} \Ev\left[\frac{\td Q(\ell)}{\td\ell}\right]\,\td\ell\\
&= \Ev[Q(L_{\max}) - Q(0)].
\end{align*}
\end{proof}

In particular, this implies the following important corollary.
\begin{corollary}\label{coroll_unbiased}
If $L_{\max}=+\infty$, then 
\[ \Ev[\widehat{Q}^{\textnormal{CLMC}}_{\infty}] = \Ev[\cQ-Q(0)]. \]
\end{corollary}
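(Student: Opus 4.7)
The plan is to derive Corollary \ref{coroll_unbiased} as a direct consequence of Proposition \ref{prop_unbiased} by sending $L_{\max} \to \infty$, together with the standing almost-sure convergence $Q(\ell) \to \cQ$. Proposition \ref{prop_unbiased} already supplies $\Ev[\widehat{Q}^{\text{CLMC}}_{L_{\max}}] = \Ev[Q(L_{\max}) - Q(0)]$ for every $L_{\max} < \infty$, so the task reduces to passing to the limit on both sides and identifying the pointwise limit $\lim_{\ell\to\infty} Q(\ell)$ with $\cQ$.

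For the right-hand side, by the assumption $Q(\ell) \to \cQ$ almost surely, we have $Q(L_{\max}) - Q(0) \to \cQ - Q(0)$ a.s. To commute the limit with the expectation I would invoke dominated convergence, using the sample-path bound $|Q(L_{\max}) - Q(0)| \le \int_0^\infty \bigl|\tfrac{\td Q}{\td\ell}(\ell)\bigr|\,\td\ell$; this bound is integrable provided the $W^{1,1}(0,L_{\max})$ hypothesis behind Proposition \ref{prop_unbiased} is extended to $W^{1,1}(0,\infty)$, i.e.\ $\int_0^\infty \Ev\bigl|\tfrac{\td Q}{\td\ell}\bigr|\,\td\ell < \infty$. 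Under this condition the right-hand side converges to $\Ev[\cQ - Q(0)]$.

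For the left-hand side, since $L$ has finite expectation it is a.s.\ finite, and the indicator $\1_{[0,L^{(k)}]}$ supports the integrand of \eqref{CLest} on the (random) bounded interval $[0,L^{(k)}]$. Hence for each realisation $\widehat{Q}^{\text{CLMC}}_{L_{\max}}$ coincides with $\widehat{Q}^{\text{CLMC}}_\infty$ as soon as $L_{\max} \ge \max_k L^{(k)}$, giving $\widehat{Q}^{\text{CLMC}}_{L_{\max}} \to \widehat{Q}^{\text{CLMC}}_\infty$ almost surely. The same integrability bound, combined with the fact that the weight $\tfrac{1}{\Py(L\ge \ell)}$ is exactly what made the expectation telescope in the proof of Proposition \ref{prop_unbiased}, allows DCT again, yielding $\Ev[\widehat{Q}^{\text{CLMC}}_{L_{\max}}] \to \Ev[\widehat{Q}^{\text{CLMC}}_\infty]$. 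Equating the two limits closes the argument. (Equivalently, one may appeal to Proposition \ref{prop_unbiased} directly with $L_{\max} = \infty$, since its statement permits this, after verifying that Fubini is applicable on $[0,\infty)$ under the same integrability.)

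The main obstacle is precisely this integrability control: Proposition \ref{prop_unbiased} was stated and proved for a bounded interval, where Fubini and the interchange of expectation with the integral are automatic. On $[0,\infty)$ one must ensure the tails of $\tfrac{\td Q}{\td \ell}$ contribute nothing pathological — a continuous analogue of the familiar MLMC requirement that $\sum_\ell |\Ev[Q_\ell - Q_{\ell-1}]|$ converges. Once this is in place, the corollary follows essentially by inspection.
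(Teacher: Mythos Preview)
Your argument is correct, but it is more elaborate than what the paper does. The paper provides no separate proof of the corollary: since $L_{\max}$ was introduced from the outset as a constant in $[0,\infty]$, Proposition~\ref{prop_unbiased} is already stated (and its proof goes through verbatim) for $L_{\max}=\infty$, so the corollary is simply the special case $L_{\max}=\infty$ combined with the identification $Q(\infty)=\cQ$ coming from $Q(\ell)\to\cQ$ a.s. Your parenthetical remark at the end captures exactly this, and is the paper's intended route.

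Your primary approach---proving it for finite $L_{\max}$ and then passing to the limit via dominated convergence on both sides---is a valid alternative, and has the merit of making explicit the integrability hypothesis (essentially $\int_0^\infty \Ev\bigl|\tfrac{\td Q}{\td\ell}\bigr|\,\td\ell<\infty$) that the paper leaves implicit when applying Fubini on the unbounded interval. The trade-off is that the limiting argument is longer and introduces an intermediate step that is not needed once one observes Proposition~\ref{prop_unbiased} already covers $L_{\max}=\infty$.
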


Corollary \ref{coroll_unbiased} shows that there is a version of the estimator \eqref{CLest} that is unbiased with respect to the expectation of the difference of the real quantity of interest $\cQ$ and $Q(0)$, and one can see the connection with the unbiased MLMC estimator introduced in \cite{rhee2015unbiased}.

In the next subsection, we will prove a complexity theorem for the
CLMC estimator \eqref{CLest}. We will pick $L$ to be distributed as an
exponential random variable to facilitate calculations and mimic the
exponential decay in the assumptions on the convergence of the
quantity of interest. Also, we will provide sufficient
and necessary conditions for the Theorem to hold in the case
$L_{\max}=+\infty$, i.e. when the CLMC estimator is unbiased with
respect to $\cQ-Q(0)$. A practical algorithm will then be described
 in Section \ref{sec:practical}.

\subsection{Complexity theorem}\label{sec_compl_thm}

The fundamental theoretical result about the MLMC method is the
complexity theorem, firstly proved in \cite{giles2008multilevel} and
generalised in \cite{cliffe2011multilevel}. In this section, we state an analogous complexity theorem for the CLMC estimator \eqref{CLest}. A full proof is given in Appendix \ref{app_CT}.

First, let us define the mean-squared-error (MSE) of the CLMC estimator $\widehat{Q}^{\text{CLMC}}_{L_{\max}}$ in
\eqref{CLest}  by
\begin{equation}\label{MSEbound}
\MSE :=\Ev\left[\big(\widehat{Q}^{\text{CLMC}}_{L_{\max}}-\Ev[\cQ-Q(0)]\big)^2\right]
\end{equation}
and denote by $\cC^{\textnormal{CLMC}}_{L_{\max}}$ its
expected computational cost.  Then, we have the following result.
\begin{theorem}[Complexity Theorem] \label{thm_CLMC_giles}
Suppose $\cQ$ is a quantity of interest and $Q\in W^{1,1}(0,\allowbreak \infty)$ is a corresponding family of numerical approximations.
Furthermore, suppose that there are positive constants $\alpha,\
\beta\le 2\alpha,\ \gamma,\ c_1,\ c_2,\ c_3$ such that, for any
$\ell>0$, we have: 
\begin{itemize}
\item[(i)] $\left|\Ev\left[\frac{dQ(\ell)}{d\ell}\right]\right|\le c_1 e^{-\alpha \ell}\,$,
\qquad (ii) \ $\V\left[\frac{dQ(\ell)}{d\ell}\right]\le c_2e^{-\beta\ell}\,$,
\item[(iii)] $\cC(\ell)\le c_3 e^{\gamma\ell}\,$, where $\cC(\ell)$ is the cost to compute one sample of $Q(\ell)$.
\end{itemize}
Furthermore, suppose that 
$L\sim\textnormal{Exponential}(r)$ with 
\[
r\in[\min(\beta,\gamma),\ \max(\beta,\gamma)]. 
\] 
Then, for any $\varepsilon \in (0,e^{-1})$, there exist $L_{\max}\in [0,+\infty)$,
$N \in \mathbb{N}$ and $C>0$ such that 
\begin{equation}
\label{eq:complex}
\MSE \; \le \; \varepsilon^2\quad\textnormal{and}\quad\cC^{\textnormal{CLMC}}_{L_{\max}}
\;\le \;C  \, \varepsilon^{-2 - \max(0,\frac{\gamma-\beta}{\alpha})}(\log\varepsilon)^{\delta_{r,\beta}+\delta_{r,\gamma}}
\end{equation}
with $\delta$ denoting the Kronecker delta.
\end{theorem}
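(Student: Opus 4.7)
The plan is a continuous-level analogue of the classical MLMC complexity argument, with the telescoping sum replaced by the integral representation \eqref{CLest}. I would first decompose
\[
\MSE = \big(\Ev[\widehat{Q}^{\textnormal{CLMC}}_{L_{\max}}] - \Ev[\cQ - Q(0)]\big)^2 + \V\big[\widehat{Q}^{\textnormal{CLMC}}_{L_{\max}}\big]
\]
and handle the two terms separately. By Proposition \ref{prop_unbiased}, the bias equals $\Ev[Q(L_{\max}) - \cQ] = -\int_{L_{\max}}^{\infty}\Ev[\td Q/\td\ell]\,\td\ell$, which assumption (i) bounds by $(c_1/\alpha)\,e^{-\alpha L_{\max}}$. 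Choosing $L_{\max}$ of order $\alpha^{-1}|\log\varepsilon|$ makes the squared bias at most $\varepsilon^2/2$.

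For the variance, the i.i.d.\ structure gives $\V[\widehat{Q}^{\textnormal{CLMC}}_{L_{\max}}] = V/N$ with $V = \V[Z]$ for a single-sample integrand $Z$. Using Fubini and the independence of $L$ from $(Q(\ell))_{\ell\ge 0}$, together with $\Ev[\1_{\ell\le L}\1_{\ell'\le L}] = \Py(L\ge\max(\ell,\ell'))$, one obtains
\[
\Ev[Z^2] = \int_0^{L_{\max}}\!\!\int_0^{L_{\max}} \Ev\!\left[\tfrac{\td Q}{\td\ell}(\ell)\,\tfrac{\td Q}{\td\ell}(\ell')\right] \frac{\Py(L\ge\max(\ell,\ell'))}{\Py(L\ge\ell)\Py(L\ge\ell')}\,\td\ell\,\td\ell'.
\]
With $L\sim\textnormal{Exponential}(r)$ the ratio collapses to $e^{\,r\min(\ell,\ell')}$, and Cauchy--Schwarz together with assumptions (i)--(ii) (plus $\beta\le 2\alpha$, which makes the squared-mean term subdominant) yields $\Ev[(\td Q/\td\ell)^2] \le C\,e^{-\beta\ell}$. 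Direct evaluation of the resulting double integral gives $V = \cO(1)$ when $r<\beta$, $V = \cO(L_{\max})$ when $r=\beta$, and $V = \cO\big(e^{(r-\beta)L_{\max}}\big)$ when $r>\beta$.

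Next, since computing a single realisation requires evaluating $Q$ up to level $\min(L^{(k)},L_{\max})$, assumption (iii) bounds the expected per-sample cost by $\Ev[c_3 e^{\gamma\min(L,L_{\max})}]$, and integration against the exponential density yields $\cO(1)$, $\cO(L_{\max})$ or $\cO(e^{(\gamma-r)L_{\max}})$ according to whether $r>\gamma$, $r=\gamma$ or $r<\gamma$. Setting $N = \lceil 2V/\varepsilon^2\rceil$ then gives the total expected cost as $N$ times the per-sample cost. Substituting $L_{\max}=\Theta(|\log\varepsilon|)$ and running through the three cases allowed by $r\in[\min(\beta,\gamma),\max(\beta,\gamma)]$ (the endpoints $r=\beta$ and $r=\gamma$, and the open interior), the exponential factors always combine to $e^{(\gamma-\beta)_+ L_{\max}} = \varepsilon^{-\max(0,(\gamma-\beta)/\alpha)}$, while an extra logarithmic factor is incurred precisely when $r=\beta$ (from $V$) or $r=\gamma$ (from the per-sample cost), producing the exponent $\delta_{r,\beta}+\delta_{r,\gamma}$.

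The main obstacle I expect is the variance estimate: the case $r>\beta$ is delicate because both exponentials in the double integral point the wrong way, and one has to apply Cauchy--Schwarz in a manner that neither destroys the positive correlation encoded in $\Ev[(\td Q/\td\ell)(\ell)(\td Q/\td\ell)(\ell')]$ nor overshoots the target exponent $(r-\beta)L_{\max}$. Once this bound is in place, the cost integration is routine and the final case analysis is a straightforward bookkeeping exercise.
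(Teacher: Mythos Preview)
Your proposal is correct and follows the same overall architecture as the paper: bias--variance split, choose $L_{\max}\sim\alpha^{-1}|\log\varepsilon|$ from assumption (i), bound the variance via Cauchy--Schwarz on the cross terms, bound the per-sample cost from assumption (iii), then multiply and run the case analysis on $r$ versus $\beta,\gamma$.

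The one organisational difference is in the variance step. The paper conditions on $L$ and uses the law of total variance,
\[
\V\big[\widehat{Q}^{\textnormal{CLMC}}_{L_{\max}}\big]=\Ev\big[\V[\,\cdot\mid L]\big]+\V\big[\Ev[\,\cdot\mid L]\big],
\]
bounding the first term with Cauchy--Schwarz on $\Cov(\td Q/\td\ell,\td Q/\td\ell')$ and the second with assumption (i), then invoking $\beta\le 2\alpha$ to show the second term is subdominant. You instead compute $\Ev[Z^2]$ directly, first integrating out $L$ to obtain the explicit kernel $e^{r\min(\ell,\ell')}$, and absorb the squared-mean contribution into the second moment via $\Ev[(\td Q/\td\ell)^2]=\V[\td Q/\td\ell]+(\Ev[\td Q/\td\ell])^2\le Ce^{-\beta\ell}$. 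Both routes use $\beta\le 2\alpha$ at the same point and arrive at the same trichotomy $V=\cO(1),\ \cO(L_{\max}),\ \cO(e^{(r-\beta)L_{\max}})$; your version is arguably a touch more direct because the $e^{r\min(\ell,\ell')}$ kernel makes the double integral elementary, while the paper's split makes the role of the mean term more explicit. Your cost model (cost at the finest level reached) differs cosmetically from the paper's (integrated cost along the path), but for exponentially growing $\cC(\ell)$ the two are of the same order and yield identical asymptotics.
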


Note that the predicted computational cost in Theorem
\ref{thm_CLMC_giles} is 
the same as in MLMC (asymptotically).

\begin{corollary}\label{coroll_thm}
Suppose that the assumptions of Theorem \ref{thm_CLMC_giles} hold and that $L_{\max} = +\infty$, i.e. let us consider the unbiased CLMC estimator 
$\widehat{Q}^{\text{CLMC}}_{\infty}$.
\begin{enumerate}
\item[(a)] If $\beta > \gamma$, then for any $\varepsilon \in (0,e^{-1})$ and for
  any $r \in (\gamma,\beta)$, there exists an $N \in \mathbb{N}$ and
  $C>0$ such that
\[
\MSE \le
\varepsilon^2\qquad\textnormal{and}\qquad\cC^{\textnormal{CLMC}}_{\infty}
\le C\varepsilon^{-2} \,.
\]
\item[(b)] If $\beta \le \gamma$ and, in addition, there exist positive constants $\eta \in [\beta,\gamma]$, $c_2'$ and $c_3'$ such that 
\[
c_2' e^{-\eta\ell} \le \V\left[\frac{dQ(\ell)}{d\ell}\right] \quad \text{and} \quad 
c_3' e^{\eta\ell} \le \cC(\ell)\,,
\]
then $\MSE \,\times \,\cC^{\textnormal{CLMC}}_{\infty} = +\infty$,  for all $r > 0$ and $N \in \mathbb{N}$, i.e. the unbiased estimator has infinite MSE or infinite cost.
\end{enumerate}
\end{corollary}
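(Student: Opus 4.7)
The plan is to use the fact that for $L_{\max}=+\infty$ the CLMC estimator is unbiased (Corollary \ref{coroll_unbiased}), so that $\MSE = \V[Z]/N$ for $Z := \int_{0}^{L} e^{r\ell}\,\frac{dQ}{d\ell}(\ell)\,d\ell$ a single realisation, while the expected total cost is $N\,\Ev[\cC(L)]$. Hence $\MSE\cdot\cC^{\textnormal{CLMC}}_{\infty} = \V[Z]\,\Ev[\cC(L)]$ is independent of $N$, and the corollary reduces to determining for which $r$ both factors are simultaneously finite.

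For part (a), I would take any $r\in(\gamma,\beta)$, a non-empty interval since $\beta>\gamma$. By Fubini and the independence of $L$ from $(Q(\ell))_{\ell\ge 0}$, together with $\Py(L\ge\ell_{1}\vee\ell_{2})=e^{-r(\ell_{1}\vee\ell_{2})}$,
\[
\Ev[Z^{2}] \;=\; \int_{0}^{\infty}\!\int_{0}^{\infty} e^{r\min(\ell_{1},\ell_{2})}\,\Ev\!\left[\tfrac{dQ}{d\ell}(\ell_{1})\,\tfrac{dQ}{d\ell}(\ell_{2})\right]d\ell_{1}\,d\ell_{2}.
\]
Cauchy--Schwarz on the inner expectation, combined with (i), (ii) and the standing hypothesis $\beta\le 2\alpha$, yields $\|\frac{dQ}{d\ell}(s)\|_{L^{2}(\Omega)}\lesssim e^{-\beta s/2}$, and the resulting double integral is finite precisely because $r<\beta$. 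For the cost, assumption (iii) gives $\Ev[\cC(L)]\le c_{3}\Ev[e^{\gamma L}]=c_{3}r/(r-\gamma)<\infty$ since $r>\gamma$. Choosing $N=\lceil\V[Z]/\varepsilon^{2}\rceil$ then yields $\MSE\le\varepsilon^{2}$ and total cost of order $\varepsilon^{-2}$.

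For part (b) I would split on $r$. If $r\le\eta$, the lower bound $\cC(\ell)\ge c_{3}'e^{\eta\ell}$ forces $\Ev[\cC(L)]\ge c_{3}'\Ev[e^{\eta L}]=+\infty$; since $\V[Z]>0$ in any non-degenerate problem, the product is $+\infty$. If $r>\eta$, the goal is to show $\V[Z]=+\infty$. Starting from the same double integral for $\Ev[Z^{2}]$ and using $\V[\frac{dQ}{d\ell}(s)]\ge c_{2}'e^{-\eta s}$, I would restrict to a thin strip $|\ell_{1}-\ell_{2}|<\delta$ around the diagonal, on which the integrand is approximately $e^{r\ell}\,\Ev[\frac{dQ}{d\ell}(\ell)^{2}]\ge c_{2}'e^{(r-\eta)\ell}$; integrating along the strip gives
\[
\Ev[Z^{2}] \;\gtrsim\; \delta\,\int_{0}^{\infty} e^{(r-\eta)\ell}\,d\ell \;=\; +\infty \quad \textnormal{for } r>\eta,
\]
and together with $\Ev[Z]=\Ev[\cQ-Q(0)]<\infty$ this forces $\V[Z]=+\infty$.

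The main obstacle is the near-diagonal argument in the $r>\eta$ case: replacing $\Ev[\frac{dQ}{d\ell}(\ell_{1})\frac{dQ}{d\ell}(\ell_{2})]$ by its diagonal value requires at least $L^{2}(\Omega)$-continuity of $\frac{dQ}{d\ell}$, and a priori negative off-diagonal covariances could partially cancel the divergent diagonal contribution. Making this lower bound rigorous in the full generality of the theorem's hypotheses seems to require exploiting additional structural information (for instance, a regularity estimate on the covariance kernel inherited from $Q$ being a convergent approximation hierarchy); I would expect the appendix proof of Theorem \ref{thm_CLMC_giles} to provide the precise statement under which the near-diagonal extraction is justified.
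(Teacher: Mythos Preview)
Your part~(a) is correct and is essentially the paper's argument: unbiasedness removes the bias term, Cauchy--Schwarz on the covariance together with (i), (ii) and $\beta\le 2\alpha$ bounds $\V[Z]$ for $r<\beta$, and the exponential tail of $L$ keeps the cost finite for $r>\gamma$. One minor correction: the paper's cost model is $N\,\Ev\big[\int_{0}^{L}\cC(\ell)\,d\ell\big]$ rather than $N\,\Ev[\cC(L)]$, but both are finite iff $r>\gamma$ and infinite iff $r\le\eta$, so nothing changes.

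For the $r>\eta$ branch of part~(b) the paper does not use your near-diagonal strip. It invokes the law-of-total-variance lower bound $\V[Z]\ge\Ev_{L}\!\big[\V_{Q}[Z\mid L]\big]$ and asserts that a lower bound on this conditional-variance term follows by ``tracking back the steps'' of the upper-bound computation in Theorem~\ref{thm_CLMC_giles}, with $c_{2}e^{-\beta\ell}$ replaced by $c_{2}'e^{-\eta\ell}$, yielding $\Ev_{L}\!\big[\V_{Q}[Z\mid L]\big]=+\infty$ for $r\ge\eta$. Your reservation is legitimate, however, and it applies to the paper's sketch as well: the upper bound in the theorem passes through the Cauchy--Schwarz step $\Cov\!\big(\tfrac{dQ}{d\ell}(\ell),\tfrac{dQ}{d\ell}(\ell')\big)\le \V[\cdot]^{1/2}\V[\cdot]^{1/2}$, and that inequality does not reverse. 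A pointwise lower bound on the diagonal $\V[\tfrac{dQ}{d\ell}(\ell)]$ does not, under the stated hypotheses alone, lower-bound the quadratic form $\int_{0}^{\lambda}\!\int_{0}^{\lambda}e^{r(\ell+\ell')}\Cov(\cdot,\cdot)\,d\ell\,d\ell'$; for instance a rank-one kernel $K(\ell,\ell')=b(\ell)b(\ell')$ with $|b(\ell)|\ge\sqrt{c_{2}'}\,e^{-\eta\ell/2}$ but rapidly alternating sign can keep $\int_{0}^{\lambda}e^{r\ell}b(\ell)\,d\ell$ bounded. So the $r>\eta$ case, in both your proposal and the paper, implicitly relies on additional structure (e.g.\ nonnegative off-diagonal covariances, or enough regularity of $\ell\mapsto\tfrac{dQ}{d\ell}$ to preclude such oscillation) that is not among the listed hypotheses; contrary to your expectation, the appendix does not supply a more detailed justification here.
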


Corollary \ref{coroll_thm} provides sufficient and necessary
conditions for the CLMC estimator with $L_{\max}=+\infty$ (which is
unbiased with respect to $\cQ-Q(0)$) to have a finite expected
complexity cost. Intuitively, since $L_{\max}=+\infty$, the finest
level at which computations are needed is $\max_{k=1}^N L^{(k)}$,
which tends to infinity as $N$ grows. Therefore, the estimator
\eqref{CLest} will have finite expected cost only if the actual
variance reduction rate is bigger than the actual cost growth
rate. The rates $\beta$ and $\gamma$ in Theorem \ref{thm_CLMC_giles}
are only upper bounds. By analogy, we believe this constraint also
applies to the unbiased estimator introduced by Rhee \& Glynn
\cite{rhee2015unbiased}. However, the paper \cite{rhee2015unbiased} is
mainly concerned with timestepping methods for SDEs, where the
condition $\gamma<\beta$ is usually satisfied.

Note that, if $L_{\max}=+\infty$, even in the case $\beta > \gamma$,
there is a non-zero probability that the finest level $L^{(k)}$ for
some sample $(k)$ is drawn larger than the maximal refinement
achievable on the particular machine that is used, but we can exactly
quantify the probability for this to happen. Indeed, if $\bar{L}$ is
the maximum refinement level achievable by the machine, the
probability that at least one sample is greater or equal than $\bar{L}$ is given by
\[ N\Py(L\ge \bar{L}) = N\exp(-r\bar{L}). \]
We will see that for problems of interests this probability is very
small. In the rare event that $L^{(k)} > \bar{L}$ for some sample $k$,
one could simply approximate $Q^{(k)}(\ell) = Q^{(k)}(\bar{L})$ for
$\ell\in[\bar{L},L^{(k)}]$. If $\bar{L}$ is sufficiently large, this
would introduce a negligible bias error to any practical values of
$\varepsilon$.

\section{Practical implementation}
\label{sec:practical}

In the previous section, we have seen that it is possible to extend multilevel Monte Carlo to a continuous framework, where the approximations of the quantity of interest are functions over a continuous family of resolutions. This point of view comes natural when the level parameter is not associated with some fixed hierarchy of approximations, but with an adaptively chosen hierarchy for each sample, e.g. in the context of adaptive finite element approximations of a PDE with random coefficients where the level parameter $\ell$ is related to the accuracy of the approximation (see Section \ref{sec:AMLMC}).

However, it still remains to show how this can be implemented in practice and how the practical implementation differs from MLMC. 
There are many possible ways to implement the estimator in \eqref{CLest}. Let us first focus in some sense on the simplest one. We will comment on other approaches at the end of this section.

\subsection{Sample-dependent level hierarchies and piecewise linear interpolation}\label{subsec:pwlinear}

Let us assume that we have estimates of the parameters $\alpha, \beta, \gamma$ in Theorem \ref{thm_CLMC_giles}. In practice, these can be obtained (on the fly) from sample averages and sample variances of $Q(\ell)$ and $\td Q(\ell)/\td\ell$, as in standard MLMC. Then, given a desired tolerance $\varepsilon>0$, Theorem \ref{thm_CLMC_giles} provides suitable choices for the number of samples $N$ and for the rate $r$ of the exponential distribution of $L$ to achieve the optimal complexity in \eqref{eq:complex}.

For any sample $k$, suppose that $(Q_j^{(k)})_{j\ge 1}$ denotes a countable sequence of approximations of $Q^{(k)}$ at levels $(\ell_j^{(k)})_{j\ge 1}$. Then, to define a continuous family $Q^{(k)}(\ell)$ of $Q^{(k)}$, we use linear interpolation such that
\[
\left(\frac{\td Q}{\td\ell}\right)^{(k)}(\ell) := 
\frac{Q_{j}^{(k)} - Q_{j-1}^{(k)}}{\ell^{(k)}_{j} - \ell^{(k)}_{j-1}}\quad\quad \textnormal{for }\ell\in (\ell_{j-1}^{(k)},\ell_j^{(k)})\,.
\]
Also, for each sample $k$, let us define the index $J^{(k)}$ corresponding to the first value of $\ell_j^{(k)}$ that is bigger than $L^{(k)}\wedge L_{\max}$, that is
\[ J^{(k)} := \min\{j\ge 1:\ \ell_j^{(k)}-(L^{(k)}\wedge L_{\max}) \ge 0 \}.\]
Hence, we can write down the CLMC estimator \eqref{CLest} as
\begin{align}
\widehat{Q}^{\textnormal{CLMC}}_{L_{\max}} & = \frac{1}{N}\sum_{k = 1}^{N}\int_{0}^{L^{(k)}\wedge L_{\max}} \frac{1}{P(L\ge \ell)}\left(\frac{\td Q}{\td\ell}\right)^{(k)}(\ell) \,\td\ell \nonumber\\[1ex]
& = \frac{1}{N}\sum_{k = 1}^{N}\sum_{j=1}^{J^{(k)}} w_j^{(k)}\left(Q_{j}^{(k)} - Q_{j-1}^{(k)}\right) \,, \label{CLMCest_pract}
\end{align}
where we define 
\begin{equation}\label{elltilde}
\tilde{\ell}_j^{(k)} := \ell_{j}^{(k)}\wedge (L^{(k)}\wedge L_{\max})\,,
\end{equation}
and the integrals in the weights $w_j^{(k)}$ can be computed explicitly as
\begin{equation}\label{CLMCest_w}
w_j^{(k)} := \frac{1}{\ell^{(k)}_{j} - \ell^{(k)}_{j-1}} 
\int_{\ell_{j-1}^{(k)}}^{\tilde{\ell}_j^{(k)}} \frac{1}{P(L\ge \ell)}
\,\td\ell =
\frac{\exp\big(r \tilde{\ell}_j^{(k)}\big) - \exp\big(r
  \ell_{j-1}^{(k)}\big)}{r\big(\ell^{(k)}_{j} - \ell^{(k)}_{j-1}\big)}\,,
\end{equation}
for all $j=1,\ldots,J^{(k)}$\,.
Algorithm \ref{alg_CLMC} provides the key instructions to implement the CLMC estimator in \eqref{CLMCest_pract}.\\[10pt]
{\centering
\begin{minipage}{.9\linewidth}
\begin{algorithm}[H]
\caption{CLMC algorithm -- Key steps}\label{alg_CLMC}
\SetKwInOut{Input}{Input}
\SetKwInOut{Output}{Output}
\Input{$\varepsilon$: tolerance;\\
$r$: exponential rate;\\
$N$: total number of samples;\\
$L_{\max}$: maximum reachable level - potentially infinite if $\gamma<\beta$.\vspace{1.5ex}}
\Output{$\widehat{Q}^{\textnormal{CLMC}}_{L_{\max}}$: CLMC estimator.\vspace{1.5ex}}
\begin{algorithmic}[1]
\STATE Initialise $\hat{Q}\gets 0$;\vspace{1.5ex}
\FOR{$k = 1,2,\dots, N$}
\STATE Sample $L^{(k)}\sim\textnormal{Exponential}(r)$;
\STATE Evaluate and store quantity of interests $Q\gets (Q_j^{(k)})_{j=1}^{J^{(k)}}$ at levels $\ell \gets (\ell_j^{(k)})_{j=1}^{J^{(k)}}$;
\STATE Calculate array $w$ of weights in \eqref{CLMCest_w};
\STATE Update $\widehat{Q}^{\textnormal{CLMC}}_{L_{\max}} \gets \widehat{Q}^{\textnormal{CLMC}}_{L_{\max}} + w^T*\diff(Q)$, where $\diff(Q)$ is the array of the differences between consecutive elements of $Q$;
\ENDFOR \vspace{1.5ex}
\STATE Set $\widehat{Q}^{\textnormal{CLMC}}_{L_{\max}} \gets \widehat{Q}^{\textnormal{CLMC}}_{L_{\max}}/N$.
\end{algorithmic}
\end{algorithm}
~\\[10pt]
\end{minipage} 
}

Note that it is easy to work out an unbiased estimator for the variance of $\widehat{Q}^{\textnormal{CLMC}}_{L_{\max}}$ in \eqref{CLMCest_pract}, which is needed to estimate the total number of samples $N$. Let us define
\[ Y^{(k)} := \sum_{j=1}^{J^{(k)}} w_j^{(k)}\left(Q_{j}^{(k)} - Q_{j-1}^{(k)}\right)\,. \]
Then \eqref{CLMCest_pract} simply reduces to a standard Monte Carlo
estimator with i.i.d. samples $Y^{(k)}$ and we can estimate
\[ \V\left[\widehat{Q}^{\textnormal{CLMC}}_{L_{\max}}\right] 
\approx \frac{1}{N(N-1)}\sum_{k=1}^N \left(\,\left(Y^{(k)}\right)^2 - \left(\frac{1}{N}\sum_{i=1}^N Y^{(i)}\right)^2\, \right)\,.  \]

\subsection{Uniform refinements as a special case}

It is interesting to see what happens in the case of uniform
refinements, where all samples $Q^{(k)}$, for $k=1,\ldots,N$, are
evaluated at the same deterministic points $\ell_j^{(k)} = \ell_j$,
for $j\ge 1$, and then interpolated. Without loss of generality, we assume that $\ell_j = j$, as in standard MLMC.

In this case, the set of possible levels reduces to
integers. Therefore, although a continuous probability distribution
for $L$ is still a valid choice, it is more natural to pick a discrete
distribution over the levels, where $\Py(L\ge j)$ is constant over the
interval $(j-1,j)$. In that case, the practical CLMC estimator in \eqref{CLMCest_pract} reduces to
\[ \widehat{Q}^{\textnormal{CLMC}}_{L_{\max}} = \frac{1}{N}\sum_{k=1}^N\sum_{j=1}^{J^{(k)}} \frac{1}{\Py(L\ge j)}\left(Q_{j}^{(k)} - Q_{j-1}^{(k)}\right)\,.  \]
A natural choice would be a geometric distribution on $L$. 

To see the relationship with the standard MLMC estimator more clearly, let us define 
\[ N(\ell) := N \Py(L \ge \ell) \,. \]
Then, $(N(\ell))_{\ell\ge 0}\subset [0,\infty)$ corresponds to a continuous density of samples, analogous to the sequence of sample sizes at discrete levels in MLMC. Moreover, the probability that $L$ is at least $\ell$ corresponds to the normalised density of samples that gets at least to level $\ell$.
Therefore, by plugging this relation in the equation above, we get
\[ \widehat{Q}^{\textnormal{CLMC}}_{L_{\max}} = \sum_{k=1}^N\sum_{j=1}^{J^{(k)}} \frac{1}{N(\ell)}\left(Q_{j}^{(k)} - Q_{j-1}^{(k)}\right)\,,  \]
which exactly corresponds to the Rhee \& Glynn estimator in \cite{rhee2015unbiased}.

\subsection{Other Implementations}
\subsubsection{Polynomial regression}
Although the practical implementation discussed in Subsection \ref{subsec:pwlinear} is a natural, practical implementation of the CLMC estimator, it is not the only possibility. One could think of exploiting the underlying continuous level structure in order to predict the global trend of the function $Q(\ell)$, thereby denoising the point-wise evaluations coming from the random samples. More concretely, imagine that each sample $k$ provides evaluations $(Q_j^{(k)})_{j=1}^{J^{(k)}}$ respectively at levels $(\ell_j^{(k)})_{j=1}^{J^{(k)}}$. Instead of defining the function $Q^{(k)}(\ell)$ as the linear interpolant between the given points as in Subsection \ref{subsec:pwlinear}, one could define $Q^{(k)}(\ell)$ to be a particular polynomial interpolant or regression function. The resulting continuous function may not exactly interpolate the points but rather catch the global trend, avoiding to overfit sample-dependent noisy oscillations.

In general, for each sample $k$, define the polynomials
\[
\left(\frac{\td Q}{\td\ell}\right)^{(k)}(\ell) := 
\sum_{i=0}^{n_p-1}a_{ij}^{(k)}\ell^i\quad\quad \textnormal{for }\ell\in [\ell_{j-1}^{(k)},\ell_j^{(k)})\,,
\]
where the coefficient $(a_{ij}^{(k)})_{i=0}^{n_p-1}$ come from some $n_p$-order polynomial regression procedure, for $j=1,\dots,J^{(k)}$. As in standard MLMC, one needs to make sure that the consecutive increments cancel properly; therefore, the fit procedure must be such that the polynomials $Q^{(k)}(\ell)$ coincide at the interval extremes $(\ell_j^{(k)})_{j=2}^{J^{(k)}-1}$, i.e. $Q^{(k)}(\ell)$ is a continuous function.

As in Subsection
\ref{subsec:pwlinear}, it can be shown that the resulting CLMC estimator is given by
\begin{equation}\label{CLMCest_pract2}
\widehat{Q}^{\textnormal{CLMC}}_{L_{\max}} = \frac{1}{N}\sum_{k = 1}^{N}\sum_{j=1}^{J^{(k)}}\sum_{i=0}^{n_p-1}a_{ij}^{(k)}\sum_{m=0}^i (-1)^m \frac{i^m}{r^{m+1}}\left(\big(\tilde{\ell}_j^{(k)}\big)^{i-m}e^{r\tilde{\ell}_j^{(k)}} - \big(\ell_{j-1}^{(k)}\big)^{i-m}e^{r\ell_{j-1}^{(k)}}\right) \,, 
\end{equation}
where $\tilde{\ell}_j^{(k)}$ is defined as in \eqref{elltilde}. Note that, when the the regression polynomial is a simple piecewise linear interpolation polynomial, the CLMC estimator \eqref{CLMCest_pract2} reduces to \eqref{CLMCest_pract}.

\subsubsection{Quadrature and higher-order differences}

It is also possible to derive alternative practical methods from the
fundamental CLMC equation in \eqref{CLtel_sum}, by using alternative
approximations of the integral and the derivative. In order to
simplify the presentation, let us assume $L$ to be constant. 

Standard MLMC can be interpreted as an estimator for the
right hand side of \eqref{CLtel_sum} that uses a backward rectangular
quadrature rule on a uniform mesh\footnote{For any integrable
  function on $(0,L)$, this is defined as $\int_0^L f(\ell)\,d\ell \approx
  h\sum\limits_{i=0}^{M-1} f(ih)$, where $h =L/M$ and $M\in\N$.} with
the derivative approximated by a backward finite difference. 
This choice of quadrature rule and finite difference approximation is
special, because it is in fact exact for this simple case. However, in
general one could also
pick other schemes, perhaps exploiting more points and therefore
catching more global information, at the price of introducing a
correction term for both of the extremes of the interval $[0,L]$ that
will also need to be estimated (this
will be made clearer in the example below).
In particular, it is possible to come up with finite difference schemes which provide better variance reduction than the standard differences in MLMC.

Here, we just give a single example to make the basic idea
clearer. For sake of notation, we will denote the approximation terms
with the level as subscript rather than as argument.
 
MLMC exploits the following approximation of the derivative:
\begin{equation}\label{2derivapprox}
\frac{\td Q(\ell)}{\td\ell} \approx \frac{Q_\ell - Q_{\ell-h}}{h}\,, 
\end{equation}
for some $h>0$. Another possible derivative approximation scheme is given by the five-point stencil formula:
\begin{equation}\label{5stencil}
\frac{\td Q(\ell)}{\td\ell} \approx \frac{Q_{\ell-2h} -8 Q_{\ell-h} +8 Q_{\ell+h} -Q_{\ell+2h}}{12h}\,.
\end{equation}
Let us call
\[ v := \lim_{\ell\to\infty} \V[Q_\ell]\,,\quad\quad c := \lim_{\ell\to\infty} \Cov\big(Q_\ell,Q_{\ell+h}\big)\,.\]
Then, in the limit $\ell\to\infty$, with the derivative approximation in \eqref{2derivapprox} we have
\[ \V\left[\frac{\td Q(\ell)}{\td\ell}\right] \approx\V\left[\frac{Q_\ell - Q_{\ell-h}}{h}\right]\to \frac{2}{h^2}(v-c)\,, \]
whereas with the derivative approximation in \eqref{5stencil} we have
\[ \V\left[\frac{\td Q(\ell)}{\td\ell}\right] \approx\V\left[\frac{Q_{\ell-2h} -8 Q_{\ell-h} +8 Q_{\ell+h} -Q_{\ell+2h}}{12h}\right]\to \frac{130}{144h^2}(v-c)\,. \]
This shows that, for $\ell$ big enough, the five-point stencil formula
provides more than double the variance reduction with respect to the
scheme used by MLMC.

In general, it can be shown that since the coefficients of any finite difference derivative approximation have to sum up to 0, the variance of the related estimator can always be asymptotically written as some constant times $v-c$. This guarantees that, for any of these approximation schemes, the variance decreases to 0 as the covariance increases.

A practical formula for the five-point stencil CLMC method can be written as
\[ \Ev[Q(L)] = \Ev[Q_{2h}] + \frac{1}{12}\sum_{i=2}^{M-1}\Ev[Q_{(i-2)h} -8 Q_{(i-1)h} +8 Q_{(i+1)h} -Q_{(i+2)h}] + \Ev[\Delta_0] +\Ev[ \Delta_L]\,, \] 
where $h=L/M$, for some $M\in\N$, and $\Delta_0$ and $\Delta_L$ are the 
correction terms at Level $0$ and $L$, respectively. They can be written as
\begin{align*}
\Delta_0 &= \frac{1}{12}(-Q_0 + 7Q_h - 5Q_{2h} - Q_{3h})\,,\\
\Delta_L &= \frac{1}{12}(Q_{(M-2)h} - 7Q_{(M-1)h} + 5Q_{Mh} + Q_{(M+1)h})\,.
\end{align*}
Note that, by using again the asymptotic argument given before, we
have $\V[\Delta_L]\to \frac{76}{144}(v-c)$ for $L\to\infty$, which
guarantees variance reduction also for the correction term $\Delta_L$. The
correction term $\Delta_0$ consists only of coarse approximations and is
therefore cheap to compute even if many samples are needed. Note,
however, that it corresponds to a finite difference approximation of a 
derivative at $\ell =0$ and thus its variance is typically
significantly smaller than $\V[Q_0]$.

\section{Application to Adaptive Multilevel Monte Carlo}\label{sec:AMLMC}

The development of the continuous level framework was motivated by the
challenge of integrating sample-wise adaptive finite element solutions
within a hierarchical framework. For a given sample, there are
significant computational gains to be realised by using goal-oriented
(towards the quantity of interest) schemes, particularly when the
random field or quantity of interest is localised. The exciting
conceptual idea here is in contrast to other adaptive multilevel MC
methods \cite{eigel2016adaptive,babushkina2016adaptive} we do not use
the refinement steps or some pre-defined error tolerances as the levels, but instead use a continuous measure of error in the quantity of interest as our level. This naturally fits within our CLMC framework.

\subsection{Subsurface Flow Problem \& Constructing Pathwise Adaptive
  Solutions} \label{sec:model}

We consider a toy-model describing steady state, single
phase, incompressible flow in a permeable medium (e.g. rock), given by the linear, scalar elliptic partial differential equation
\begin{equation}\label{eqn:Darcy}
-\nabla \cdot k({\bf x}) \nabla u({\bf x}) = f({\bf x}) \quad \forall {\bf x} \in D \subset \mathbb R^d,
\end{equation}
subject to suitable boundary conditions. Physically $u({\bf x})$ is
the fluid pressure, $f({\bf x})$ the fluid source term and $k({\bf
  x})$ the scalar permeability field. In practical applications
(e.g. in oil reservoir simulation), the permeability field $k({\bf
  x})$ or the source term $f({\bf x})$ are not known everywhere, therefore
a typical approach is to model each as a random field. Let the sample
space be denoted by $\Omega$, then the random permeability and source
field  $k({\bf x}, \omega)$ and $f({\bf x}, \omega)$ belong to $D
\times \Omega$ with a certain distribution (inferred from
data). Therefore the solution to \eqref{eqn:Darcy}, the unknown
pressure field, is also a random field i.e. $u({\bf x}, \omega) \in D
\times \Omega$. For simplicity, we shall restrict ourselves to
homogeneous Dirichlet conditions $u(\omega,\cdot) \equiv 0$ on the
domain boundary $\partial D$.

For a fixed $\omega \in \Omega$ we can recast \eqref{eqn:Darcy} as a
standard variational problem, i.e. find $u({\bf x},\omega) \in V :=
H^1_0(D) = \{ v \in H^{1}(D) : v = 0 \; \mbox{on} \; \partial D \}$, such that 
\begin{equation}\label{eqn:variational_problem}
\underbrace{\int_D k({\bf x},\omega) \nabla u \cdot \nabla v \; d{\bf
    x}}_{=: \; a(\omega;u,v) } \; = \; \underbrace{\int_D
f({\bf x},\omega) v\;d{\bf x}}_{=: b(\omega;v)}\; \,, \qquad \forall v \in V\,.
\end{equation}
Here, $D$ is assumed to be  a bounded Lipschitz domain and $V =
H^1_0(D)$ is the usual Sobolev space of weakly differentiable
functions on $D$. Then, $a(\omega;\cdot,\cdot)$ is a symmetric, bounded and 
positive-definite bilinear form on $V \times V$, and as such
defines an inner product and a norm on $V$,\linebreak
the so-called energy norm $\|u\|_a := \sqrt{a(u,u)}$. If $f$ is
sufficiently smooth, then the 
functional $b(\omega; \cdot)$ is bounded on $V$.

To approximate the pressure solution $u({\bf x},\omega)$, we construct
a (sample-wise adapted) finite element (FE) space $V_h(\omega) \subset V$ of piecewise
linear Lagrange polynomials on a grid $\mathcal T_h(\omega)$ that
vanish on the boundary of $D$. The FE solution $u_h({\bf x},\omega) \in V_h(\omega)$ satisfies 
\begin{equation}\label{eqn:variational_darcy}
a(\omega;u_h,v_h) = b(\omega;v_h)\,, \qquad \forall v_h \in V_h(\omega),
\end{equation}
resulting in a (large) linear system of equations of dimension $M_h(\omega) :=
\dim(V_h(\omega))$. From this, we are interested in approximating
statistics (e.g. the expected value) of a {\em quantity of interest}~$\cQ$, defined to be (for simplicity) a linear functional of $u_h({\bf x},\omega)$.

As motivated at the beginning of this section, we are going to build
our approximate solutions, sample-by-sample using adaptive finite
element methods. But instead of using the number of
refinement steps as the level parameter and applying MLMC, we will use a sample-wise error estimate as
the level parameter and apply our new CLMC framework.  

For any $\omega \in \Omega$, starting with an initial grid $\mathcal
T^{(0)} (\omega)$, chosen to be the same for each sample, we use an
$h$-adaptive refinement strategy to construct a sequence of grids
$\mathcal T^{(k)} (\omega)$ for $k = 0,\ldots,K$. In our case, the
adaptive procedure is driven by a local, {\em goal-orientated} error
indicator $e_\tau^{(k)}(\omega)$, for each $\tau \in
\mathcal T^{(k)}(\omega)$. This gives the relative contribution from
each element to the error in the quantity of interest $\cQ(u(\omega))$, so that 
\begin{equation}\label{eqn:error_estimator}
|\cQ(u(\omega)) - \cQ(u^{(k)}(\omega))| \le e^{(k)}(\omega) = \left( \sum_{\tau \in \mathcal T^{(k)}(\omega)} e_\tau^{(k)}(\omega) \right)^{1/2}\,.
\end{equation}

In addition to solving 
\eqref{eqn:variational_darcy} (the so-called {\em primal problem}), goal-oriented error
estimators typically also require an approximate FE solution
$w_h(\omega,{\bf x})$ of the {\em dual problem}
\begin{equation}\label{eqn:dual}
a(\omega;v_h,w_h) = \cQ(v_h) \quad \forall v_h \in V_h.
\end{equation}
 There are many different choices of goal-oriented error
estimators, see for example \cite{Gra05}.
For one particular choice,
described in detail in \cite{Gra05}, the error estimator $e_\tau^{(k)}(\omega)$ in each
element $\tau \in \mathcal T^{(k)}$ is computed by bounding the product of the
energy norms of the errors in the primal and dual FE solutions
$u_h(\omega,{\bf x})$ and $w_h(\omega,{\bf x})$ of
\eqref{eqn:variational_darcy} and  \eqref{eqn:dual}, respectively. 
Up to a sample-dependent
constant, these bounds are simply the sum of the element residuals and
of the jumps/discontinuities in inter-element fluxes for each of the two
problems. Full details can be found in \cite{Gra05}, but we will also
provide some more details in Appendix
\ref{sec:error_estimator}. 

The FE grid $\mathcal T^{(k+1)}(\omega)$ is generated by
refining the $\theta^{(k)}$ percent of elements of $\mathcal T^{(k)}(\omega)$
that contribute most to the error in $\cQ$ as defined by 
\eqref{eqn:error_estimator}. This is typically followed by some
additional refinements that ensure that the FE space $V^{(k+1)}(\omega)$ is conforming,
i.e. that there are  no hanging nodes in $\mathcal T^{(k+1)}(\omega)$.
In our numerical experiments below, we increase $\theta^{(k)}$ as $k$ increases
and use a so-called {\em red/green refinement} strategy that ensures conformity.

Finally, we now define our sample-wise {\em continuous level}
at refinement step $k$ to be
\begin{equation}\label{eqn:error_measure}
\ell_k(\omega) = - \log\left( \frac{e^{(k)}(\omega)}{e^{(0)}(\omega)}\right)
\end{equation}
The level gives a sample-wise measure of the error in
$Q_k(\omega)$, the quantity of interest computed on $\mathcal
T^{(k+1)}(\omega)$, relative to the error on the coarsest grid. We note
that with this choice, computations on $\mathcal T^{(0)}$ are naturally
providing values $Q_0(\omega)$ at level
$\ell_0(\omega) = 0$. However, the main reason for defining the error
in this way is due to the explicit error estimator that are being used
being only known up to an unknown constant (dependent on $\omega$).

\subsection{Numerical Experiments}

All the numerical experiments are calculated using the high
performance FE library DUNE~\cite{Bas10} and its discretisation module
\texttt{dune-pdelab}. Simulations are carried out on a computer
consisting of four, 8-core Intel Xeon E5-4627v2 Ivybridge processors,
each running at 1.2 GHz, giving a total of 32 available cores. The
solutions for each sample are computed on a single processor and
independent samples are equally distributed across all available
cores. Individual solutions of the forward and dual problems are obtained using
the sparse direct solver \texttt{UMFPACK} \cite{Dav04}. Each adaptive step uses the
{\em red/green refinement} strategy, as implemented in
\texttt{dune-grid} \cite{Bas08}, refining $\theta^{(k)}$ percent of
elements from $\mathcal{T}^{(k)}$ to $\mathcal{T}^{(k+1)}$.

In our numerical test, we consider $D := [0,1]^2$. The coarse grid $\mathcal T^{(0)}$ for all samples is taken as
a uniform $32 \times 32$ triangular mesh on $D$. In our test we
consider \eqref{eqn:Darcy} with random permeability field $k$
and random source term $f$. The permeability field $k({\bf
  x},\omega)$ is characterised by a log-normal random field, where
$\log k({\bf x},\omega)$ has a mean of zero and a two-point
exponential covariance function
\begin{equation}\label{eqn:covariance_function}
C({\bf x},{\bf y}) := \exp\left( -3\,\|{\bf x}-{\bf y} \|_1\right) \quad {\bf x},{\bf y} \in D,
\end{equation}
with $\|\cdot\|_p$ denoting the $\ell_p$-norm in $\mathbb R^2$. The field is parameterised with a (truncated) Karhunen-Lo\`eve (KL) expansion
 \begin{equation}\label{eqn:KL}
 k({\bf x},\omega) = \exp\left(\sum_{i=1}^R \sqrt{\mu_i}\phi_{i}({\bf x}) \xi_i\right).
 \end{equation}
where $\{\mu_i \}_{i\in\mathbb N}$ are the eigenvalues, $\{\phi_i({\bf
  x}) \}_{i\in\mathbb N}$ the corresponding $L_2$-normalised
eigenfunctions of the covariance operator with kernel function $C({\bf
  x},{\bf y})$ and $\xi_i \sim \mathcal N(0,1)$. For more details on
how this expansion is constructed see for example
\cite{cliffe2011multilevel}. In the calculations which follow we take $R=36$. For the random source term, we take
\begin{equation}
f({\bf x},\omega) = 1000 \,a\; \exp \left( -20 \|{\bf x} - {\bf y}_f \|^2_2\right)
\end{equation}
where $a$ and the components of ${\bf y}_f$ are all sampled from $\mathcal U(0,1)$.

As the quantity of interest, we consider the average pressure near ${\bf y}_Q := [0.25, 0.25]^T$, defined by the linear functional
\begin{equation}\label{eqn:Test2_QoI}
\cQ(u) := C_1\int_{D} \exp\left(-\frac{\| {\bf x} - {\bf y}_Q\|_{2}^{2}}{\lambda_Q}\right) u({\bf x},\omega) d{\bf x},
\end{equation}
with $\lambda_Q = 0.0005$ and $C_1 =
\left(\int_{D} \exp(-\| {\bf x} - {\bf y}_Q\|_{2}^{2}/\lambda_Q) d{\bf
    x}\right)^{-1} \approx 0.00157$. 

We now test our CLMC algorithm (Algorithm \ref{alg_CLMC}) by comparing
uniform refinements and adaptive refinements with a variable $\theta^{(k)}$
(percentage of elements refined per step). In particular, we choose
\begin{equation}
\theta^{(k)} = \min(100\%,\delta^k\theta_0)
\end{equation} 
as the percentage of elements refined in $\mathcal{T}^{(k)}$, with 
$\theta_0 = 1\%$ and $\delta = 3$. We note that this choice
is heuristic, motivated by a series of test runs. For the problem
at hand, the idea of starting with small $\theta^{(0)}$ and increasing
the percentage with the number of adaptive steps makes sense. Initially the error in
$\cQ$ is dominate by the fact that the grid is not well adapted to the
particular random sample $\omega \in \Omega$. This includes the random
field, the location of the localised source and the quantity of
interest itself. Once the adaptive strategy has focused in on all
those localised regions, the error in $\cQ$ is governed by the global lack of
singularity in the coefficient
\cite{charrier2013finite,teckentrup2013further} and thus distributed fairly
uniformly across the whole domain. So from that point onwards, refining all
elements uniformly leads to the most effective error reduction.

\begin{figure}[t]
\centering
\includegraphics[width = \linewidth]{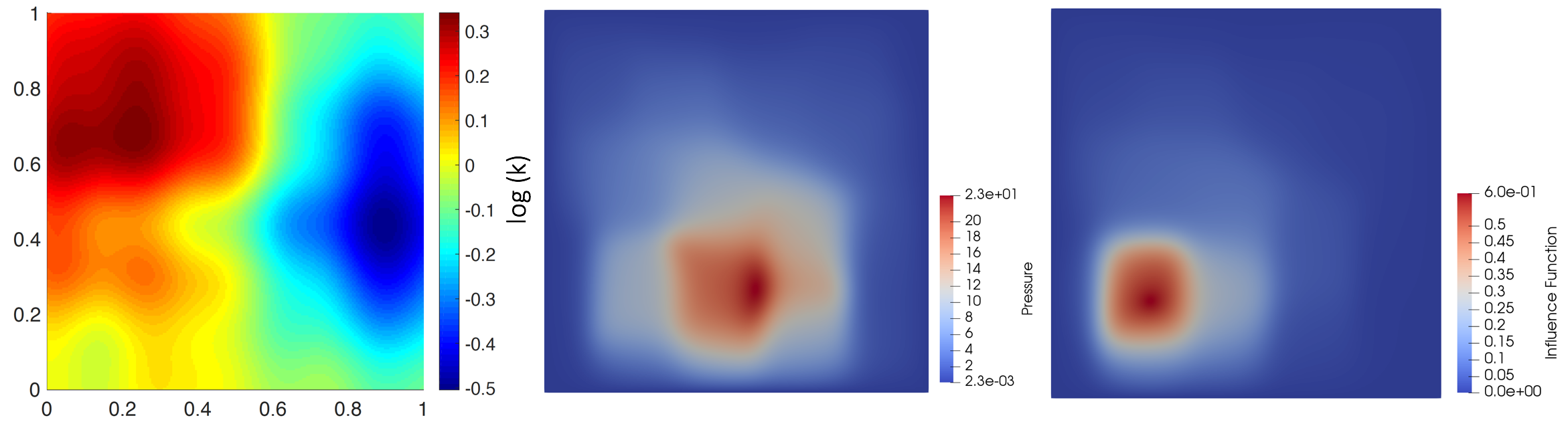}
\caption{Permeability field $k$, pressure 
solution $u_h$ and influence function $w_h$ on the finest adaptive grid ($k=6$) for a particular realisation 
$\omega \in \Omega$.\label{fig:solution_TC1}}
\end{figure}

Before running a complete simulation we first consider a single sample
$\omega \in \Omega$. Figure~\ref{fig:solution_TC1} shows the random
permeability field $k({\bf x},\omega)$, pressure solution $u_h({\bf x},\omega)$, and the influence function 
$w_h({\bf x},\omega)$ (i.e. the solution of the dual problem \eqref{eqn:dual})
for this sample after $6$ adaptive
steps. Snapshots of the grids, built
using the goal-oriented error estimator, are shown in
Figure~\ref{fig:example_grids_TC1} at steps $0$, $2$, $4$ and
$6$. Visually, we see that the adaptive
scheme is working correctly, refining near ${\bf y}_Q = [0.25,0.25]^T$,
the point around which the pressure is averaged in the functional
$\cQ$ in \eqref{eqn:Test2_QoI}, whilst also adapting around the
localised source. At the latter levels the refinement also starts to
pick up local variations in the
permeability field in regions that influence the
pressure at the point of  interest.
\begin{figure}[t]
\centering
\includegraphics[width = 1.0\linewidth]{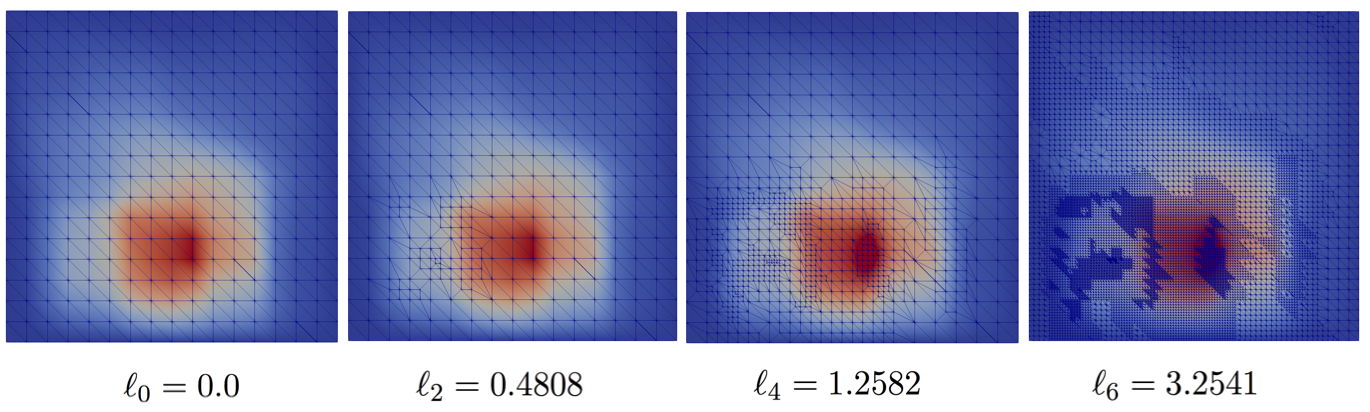}
\caption{Sequence of adaptive grids built using goal-oriented error estimator for random $\omega \in \Omega$, level is defined by $\ell_k$ given by \eqref{eqn:error_measure}.\label{fig:example_grids_TC1}}
\end{figure}


For the uniform and adaptive strategy, we first run an initial batch
of $6400$ samples up to $L_{\max} = 5$, in order to estimate the
parameters $\beta$ and $\gamma$. In a real simulation, it would not be
necessary to estimate these parameters accurately and so significantly
fewer samples could be used. With uniform refinements, our
estimates are $\beta_u=2.28$ and $\gamma_u=1.0$, whereas for adaptive
refinements we get $\beta_a=2.22$ and $\gamma_a=0.78$. Note that, in both
cases, $\beta>\gamma$, therefore by taking  $L_{\max}=+\infty$ in the
CLMC setting we obtain unbiased estimators with respect to
$\Ev[\cQ-Q(0)]$. In these initial runs we can already see the expected
computational gains of adaptive grid refinement. We note that the
rates $\beta$ for $\mathbb V[dQ/d\ell]$ are much the same in each
case, whilst $\gamma$, the rate of growth of the expected cost per
sample, is clearly smaller for the adaptive strategy. Figure
\ref{fig:costperSample} gives a plot of the continuous level $\ell$,
representing the estimate of the relative finite element error,
against the natural log of the cost for all samples, which shows the
better rate for the adaptive scheme.

\begin{figure}
\centering
\includegraphics[width=0.4\linewidth]{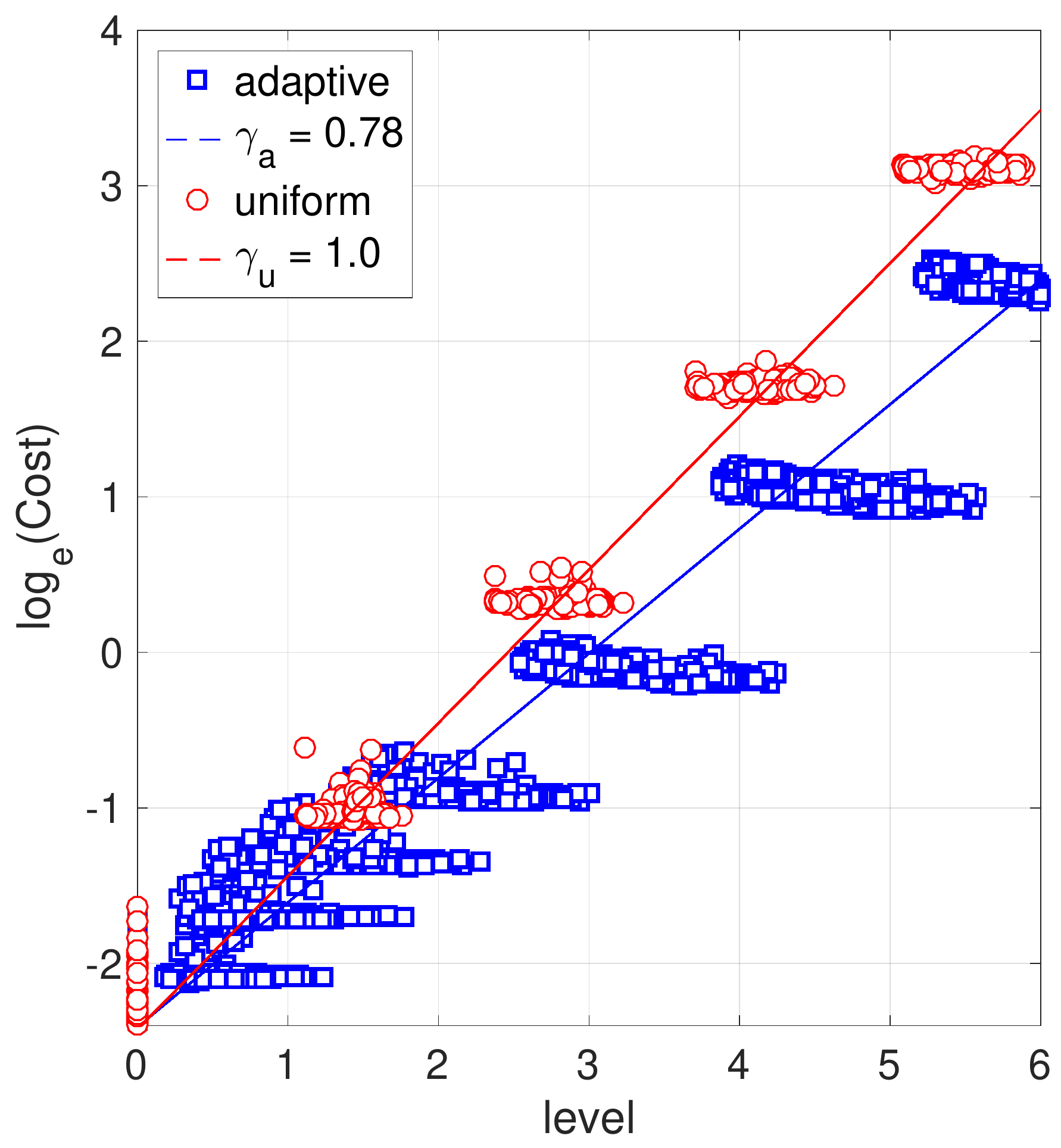}
\caption{Level $\ell$ against $\log($Cost$)$ for 6400 uniform (red
  circles) and 6400 adaptive samples (blue squares).\label{fig:costperSample}}
\end{figure}

\begin{figure}[t]
\centering
\includegraphics[width = 0.30\linewidth]{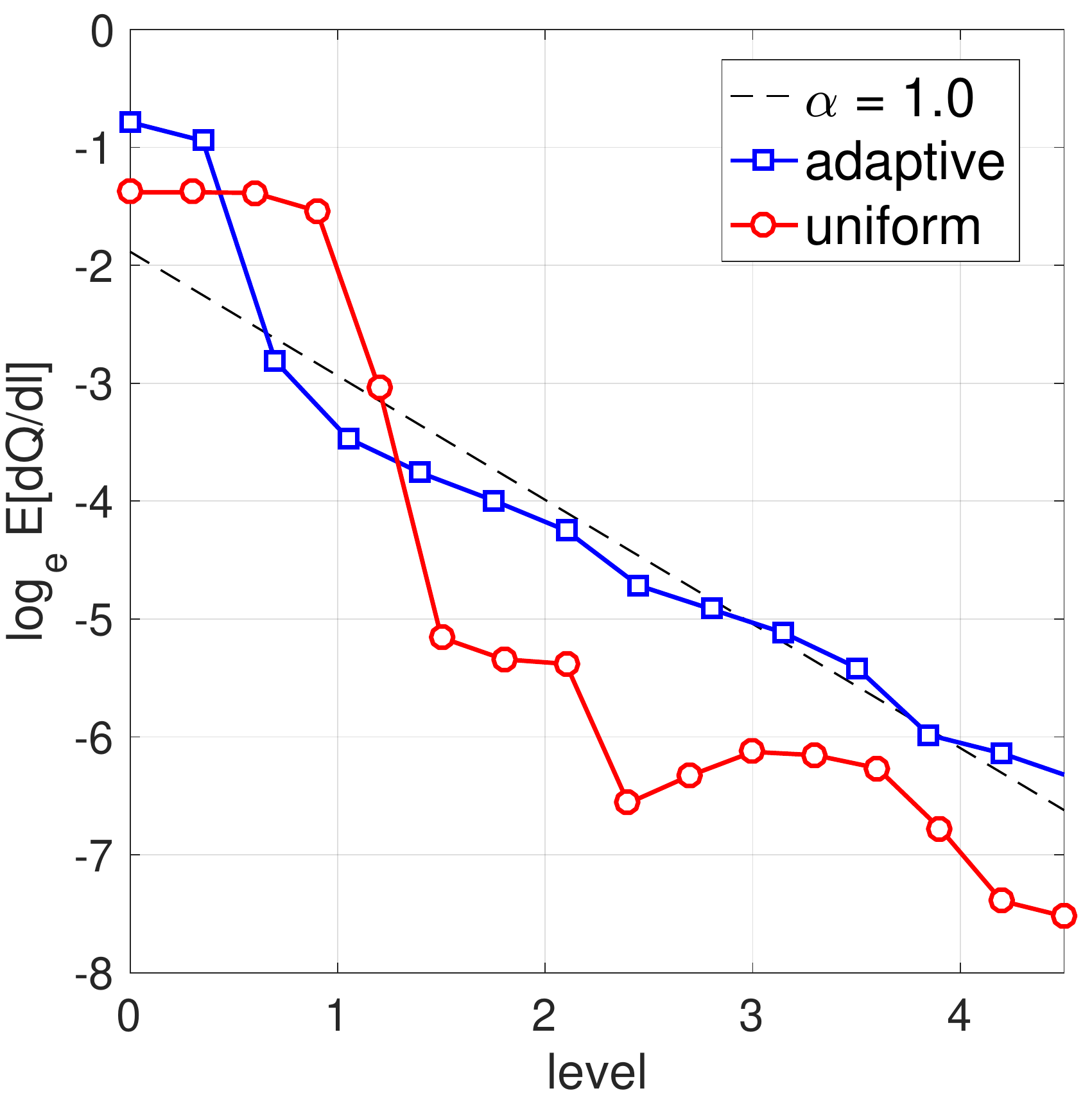}\includegraphics[width = 0.30\linewidth]{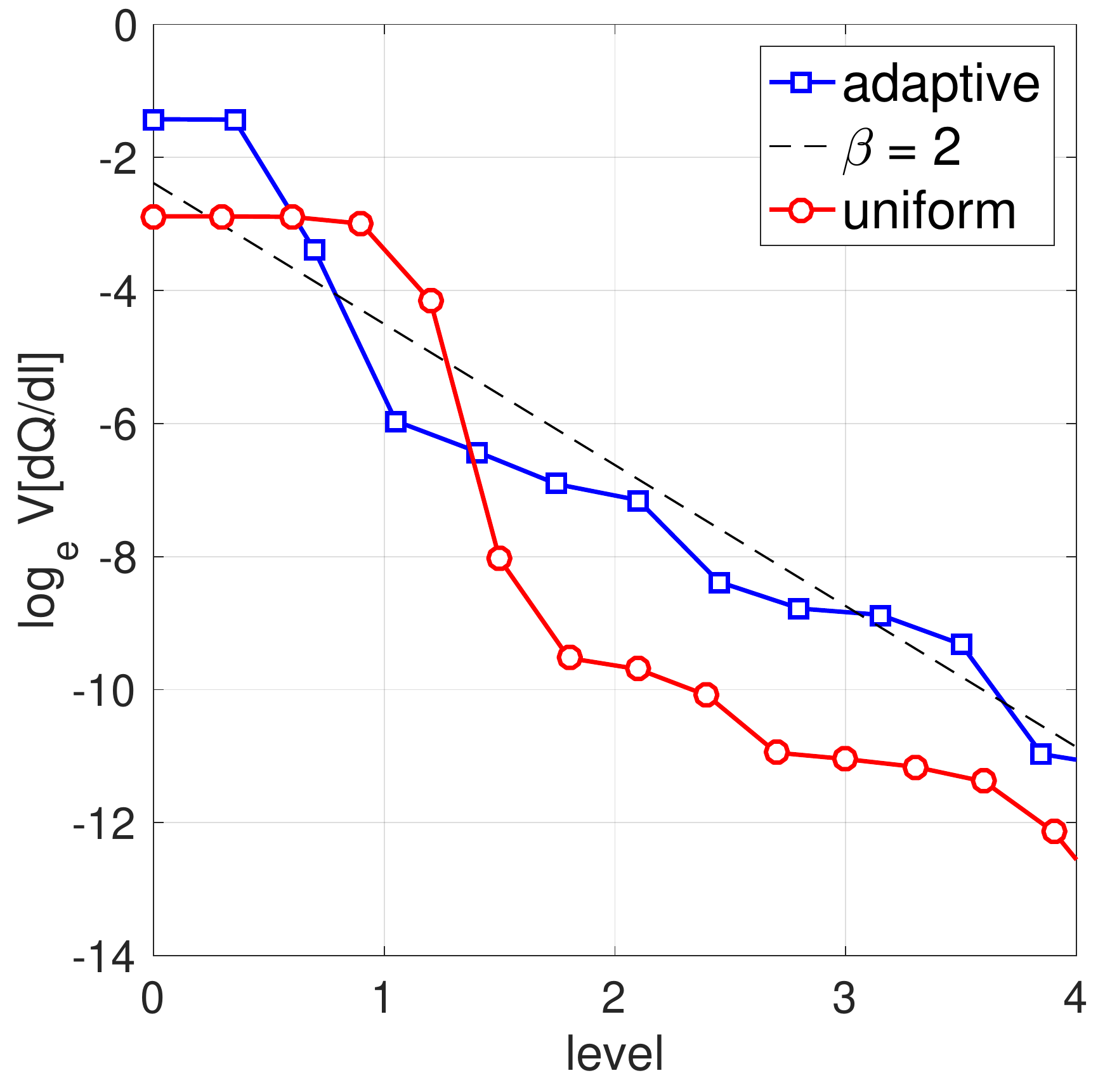}\includegraphics[width = 0.32\linewidth]{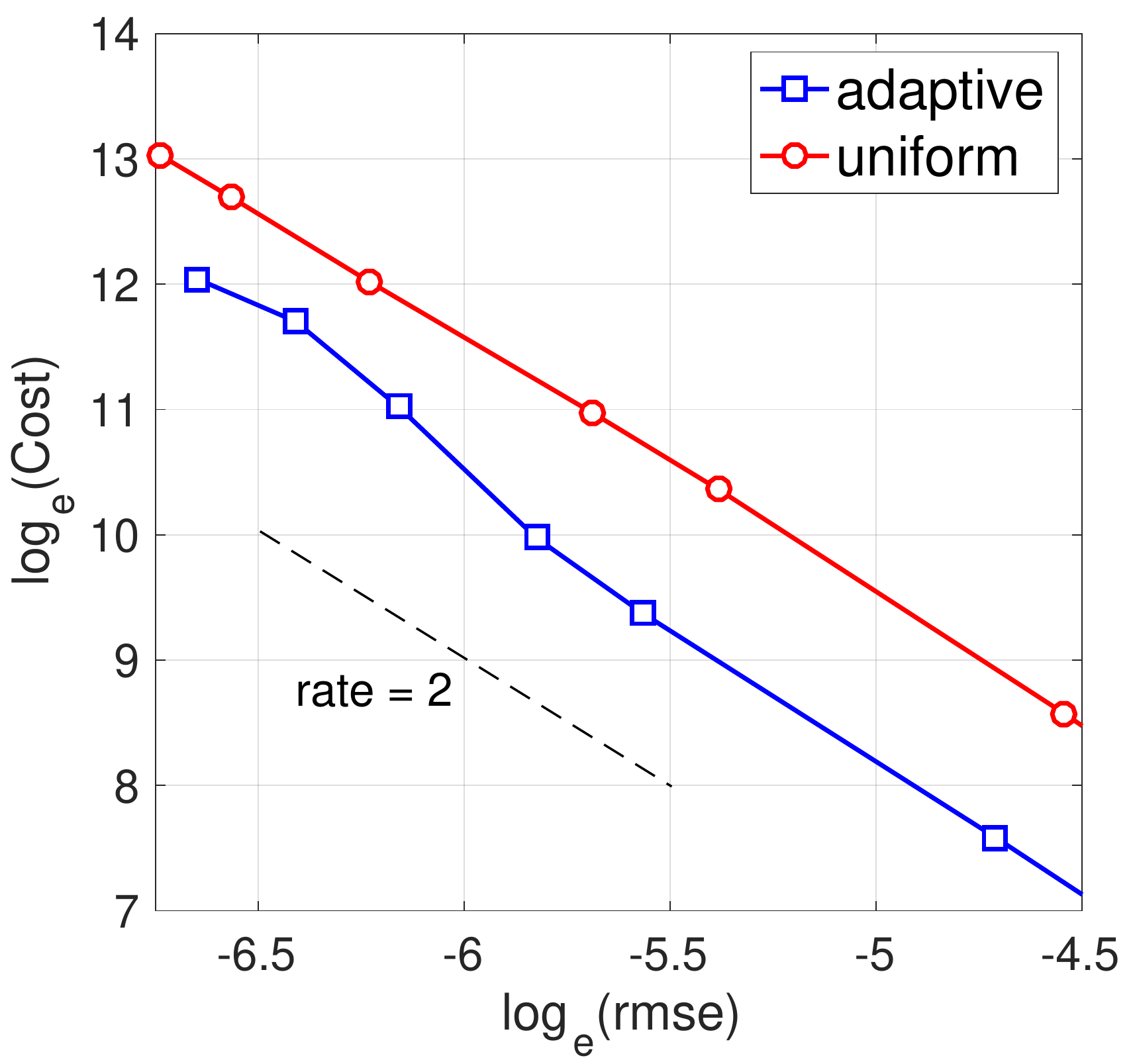}
\caption{Results for the numerical test, in log-scale. Left and middle:
  Convergence plots of $\mathbb E[dQ/d\ell]$ and $\mathbb V[dQ/d\ell]$
  against $\ell$ respectively. Right: Total cost of uniform and
  adaptive algorithm (in seconds) against estimated sampling error
(= root mean square error due to unbiasedness).\label{fig:results_TC1}}
\end{figure}
We then run the CLMC algorithm with a maximum of $N=10^6$ samples for each
case. The exponential parameter rate $r$ is taken to be the same for
each case, so that any computational gains can be attributed to the
adaptive strategy, rather than a difference in $r$. The value is
chosen so that $r = \frac{1}{2}(r_u + r_a) = \frac{1}{4}(\beta_u +
\gamma_u + \beta_a + \gamma_a) = 1.57$, and we consider the unbiased
estimator with $L_{\max} = +\infty$.

The numerical results show that the CLMC algorithm is working as
expected. In Figure~\ref{fig:results_TC1} (left), we observe
as expected that the natural logarithm of $\mathbb E[dQ/d\ell]$
decreases linearly with $\ell$, i.e. $\alpha \approx 1$, in both the
uniform and the adaptive case, since $\ell$ is defined as the natural
logarithm of an estimate of the relative bias
error. Figure~\ref{fig:results_TC1} (middle) shows the variance
reduction for both uniform and adaptive refinement strategies. Both
decay very similarly across the levels with rates of around $\beta = 2$.
Finally, Figure~\ref{fig:results_TC1} (right) shows the actual cost to
compute the estimate for different choices of $N$. The cost (in
seconds) is plotted against the root mean square error, which is equal
to the sampling error, since the estimator is unbiased. As proved in
Theorem~\ref{thm_CLMC_giles}, since $\beta>\gamma$ for both
strategies, we observe parallel straight lines with rate of
$\approx 2$. Due to the reduced computational cost on the finer
levels, the adaptive strategy wins over the uniform one across a range of
tolerances. Especially for coarser tolerances the gains are
significant and the sample-adaptive level hierarchy consistently
reduces the cost by a factor of $4$.

The actual gains that are possible with the new CLMC estimator and
with sample-adaptive level hierarchies are very problem
dependent. They also depend strongly on the error estimator and on the
adaptive refinement strategy. The estimator and the strategy employed
here are by no means optimal. It is known that the employed error
estimator is not necessarily very effective in the context of strong
coefficient variations. Finally, the gains also depend on the cost of
the linear solver. For a fair comparison, we used a sparse direct
solver, which outperforms iterative solvers for the problem sizes
encountered in our 2D model problem. However, further experiments in
three space dimensions will require iterative solvers and robust
preconditioners that can cope both with the strong coefficient
variations and with the locally refined finite element
meshes. The cost and the memory requirements of sparse direct solvers
grow too rapidly in 3D. Nevertheless, we expect the gains in 3D to be even more
significant.

\section{Conclusions \& Further Work}
\label{sec:conclusion}

In this paper, we introduce Continuous Level Monte Carlo (CLMC), a
generalisation of MLMC to a continuous framework where the level is a
continuous variable rather than an integer. We propose a
practical estimator and prove a Complexity Theorem, showing the
same order of convergence as in MLMC. Furthermore, we provide a
version of the estimator that is unbiased with respect to the true
quantity of interest and extend the Complexity Theorem to this
case, giving sufficient and necessary conditions for the unbiased
estimator to have finite cost. We apply CLMC to adaptive refinement
schemes, where the continuous framework is particularly well suited in
order to capture sample-based level hierarchies. We demonstrate
clear computational gains when adaptive refinement strategies are adopted
rather than uniform ones. 

The introduction of CLMC opens the door 
to several new research directions. We outline a few ideas for further
work:\smallskip

\noindent {\em Extension of Multi-Index Monte Carlo (MIMC)}
  \cite{hajimulti}. MIMC is an extension of MLMC to
multi-dimensional level parameters and higher-order differences. 
In the same way, as CLMC generalises MLMC 
by replacing the sum with an integral and the difference with a
derivative in the case of a scalar level parameter, one could
generalise MIMC by employing multi-dimensional
integrals of partial derivatives. Indeed, consider
$(Q(\bs\ell))_{\bs\ell}$ to be a sequence of approximation functions
of $\cQ$, where $\bs\ell=(\ell_1,\dots\ell_m)$ is a $m$-dimensional
vector of non-negative levels. To explain the idea, let us restrict our
description to $m=2$ and consider a $2$-dimensional positive
random variable $\bs L=(L_1,L_2)$. Assuming sufficient
regularity, we can write
\begin{equation}\label{eq:CIMC}
\Ev\big[Q(\bs L) - Q(\bs 0)\big] =
\Ev\left[\int_0^{L_1}\!\!\!\!\int_0^{L_2}\frac{\de^2Q(\bs\ell)}{\de
    \ell_1\de\ell_2}\,d\bs\ell \right]  \ + \ \sum_{j=1}^2 \;
\Ev\left[\int_0^{L_j} \frac{\de Q(\bs\ell)}{\de
    \ell_j}\,d\ell_j \right] \,.
\end{equation}
Note that \eqref{eq:CIMC} is a two-dimensional extension of the formula in
\eqref{CLtel_sum}. It is outside the scope of this paper, but we argue
that different choices for the probability
distribution of the vector of finest levels $\bs L$ (with potentially
correlated components) correspond to different choices of the grid of
levels in MIMC. A natural choice would be again to pick independent
$L_i\sim\text{Exponential}(r_i)$, for $i=1,\dots,m$, with $r_i>0$. 
Classically, in MIMC, $\bs L$ is a fixed integer vector chosen to control the
bias error, while the optimal strategy for the choice of samples
avoids computation of samples for levels with $\ell_1/L_1 + \ell_2/L_2
> 1$. Here, the bias can again be completely eliminated (provided the
variance decays fast enough w.r.t. the growth in cost), 
and the optimal strategy is a direct consequence of the choice of the
exponential distributions for $L_1$ and $L_2$, making the probability that both
$\ell_1$ and $\ell_2$ are simultaneously large practically zero.\smallskip

\noindent {\em Extension of Multilevel Monte Carlo Markov Chain
  (MLMCMC) \cite{dodwell2015hierarchical}.} Multilevel techniques have
been successfully applied to sampling algorithms like MCMC,
drastically reducing their complexity cost. The extension of MLMCMC to
Continuous Level MCMC is object of future work, potentially leading to
an estimator that is unbiased with respect
to the real quantity of interest, under the real target probability
distribution. Such an unbiased estimator would be of great interest:
unlike forward problems, where the bias can arise only from the
approximation of the quantity of interest, inverse problems have the
additional issue of an approximation of the target probability
distribution. Unbiasedness guarantees that the estimator is in fact
estimating the correct unknown, without expensive extra computational
cost to estimate the bias error. In addition, continuous level
adaptive refinement strategies will significantly help to slim down MCMC's
computational cost, allowing to solve even more complex problems.\smallskip
  

\appendix

\section{Proof of the Complexity results}\label{app_CT}

\subsection{Proof of Theorem \ref{thm_CLMC_giles}}
\label{proof_thm}
\begin{proof}
First, we want to bound the MSE by $\varepsilon^2$. By the
bias-variance decomposition, this can be achieved by bounding both the
squared bias and variance by $\varepsilon^2/2$.

By using assumption \textit{(i)} and recalling that $L\sim\textnormal{Exponential}(L)$, the bias term is bounded by
\begin{align}
\Bigg|\Ev\left[\widehat{Q}^{\textnormal{CLMC}}_{L_{\max}} - (\cQ-Q(0))\right]\Bigg| &= \Bigg|\Ev\left[\int_{L\wedge L_{\max}}^L\frac{1}{\Py(L\ge \ell)}\frac{\td Q(\ell)}{\td \ell}\,d\ell\right]\Bigg| \notag\\
&\le \Ev\left[\int_{L\wedge L_{\max}}^L\frac{1}{\Py(L\ge \ell)}\Bigg|\Ev\left[\frac{\td Q(\ell)}{\td \ell}\right]\Bigg|\,d\ell\right] \notag\\
&\le c_1 \Ev\left[\int_{L\wedge L_{\max}}^L\frac{1}{\Py(L\ge \ell)}e^{-\alpha\ell}\,d\ell\right] \notag\\[1ex]
&= \begin{cases}\frac{c_1}{r-\alpha}\Ev\left[e^{(r-\alpha)L}-e^{(r-\alpha)L\wedge L_{\max}}\right] &\mbox{if } r\ne \alpha\\[0.5ex]
c_1\Ev[L-L\wedge L_{\max}] &\mbox{if } r=\alpha
\end{cases}\label{biasbound_intermed}\\[1ex]
&= \frac{c_1}{\alpha}e^{-\alpha L_{\max}} \,, \label{biasbound}
\end{align}
where we can explicitly compute the expected values in
\eqref{biasbound_intermed} using the distribution of $L$.
 
As we want to bound the squared bias by $\varepsilon^2/2$, this is equivalent to bounding the bias by $\varepsilon/\sqrt{2}$, which can be achieved by setting
\begin{equation}
\label{boundLmax}
L_{\max} \ge \left\lceil \frac{1}{\alpha}\log\frac{\sqrt{2}c_1r\varepsilon^{-1}}{\alpha}\right\rceil\,.
\end{equation}

Then, let us provide an upper bound for the variance of the CLMC estimator \eqref{CLest}. By the law of total variance, we have
\begin{equation}\label{totvar}
\V[\widehat{Q}^{\textnormal{CLMC}}_{L_{\max}}] = \Ev\left[\V[\widehat{Q}^{\textnormal{CLMC}}_{L_{\max}}|L]\right] + \V\left[\Ev[\widehat{Q}^{\textnormal{CLMC}}_{L_{\max}}|L]\right]\,.
\end{equation}
Let us start by bounding the first term on the right-hand-side of \eqref{totvar}. We will use Cauchy-Schwarz inequality on the covariance, followed by assumption \textit{(ii)}. We have
\begin{align*}
\Ev\left[\V[\widehat{Q}^{\textnormal{CLMC}}_{L_{\max}}|L]\right] &= \Ev\left[\Cov\left(\widehat{Q}^{\textnormal{CLMC}}_{L_{\max}},\widehat{Q}^{\textnormal{CLMC}}_{L_{\max}}\,\big|\,L\right)\right]\\
&=\frac{1}{N}\Ev\left[\int_{[0,L\wedge L_{\max}]^2} \frac{1}{\Py(L\ge
  \ell)}\frac{1}{\Py(L\ge \ell')}\Cov\left(\frac{\td Q(\ell)}{\td
  \ell},\frac{\td Q(\ell')}{\td
  \ell'}\right)\,\td\ell\td\ell'\right]\\
&=\frac{1}{N}\Ev\left[\int_{[0,L\wedge L_{\max}]^2} \frac{1}{\Py(L\ge \ell)}\frac{1}{\Py(L\ge \ell')}\V\left[\frac{\td Q(\ell)}{\td \ell}\right]^{\frac{1}{2}}\V\left[\frac{\td Q(\ell')}{\td \ell'}\right]^{\frac{1}{2}}\,\td\ell\td\ell'\right] \\
&=\frac{1}{N}\Ev\left[\left(\int_0^{L\wedge L_{\max}} \frac{1}{\Py(L\ge \ell)}\V\left[\frac{\td Q(\ell)}{\td \ell}\right]^{\frac{1}{2}}\,\td\ell\right)^2\right] \\
&\le \frac{1}{N}c_2^2\Ev\left[\left(\int_0^{L\wedge L_{\max}} \frac{1}{\Py(L\ge \ell)}e^{-\frac{\beta}{2}\ell}\,\td\ell\right)^2\right]\\[1ex]
&= \begin{cases}\frac{1}{N}\frac{4c_2^2}{(2r-\beta)^2}\Ev\left[\big(e^{(r-\frac{\beta}{2})L\wedge L_{\max}}-1\big)^2\right] &\mbox{if } r\ne \beta/2\\[1ex]
\frac{1}{N}c_2^2\Ev[(L\wedge L_{\max})^2] &\mbox{if } r = \beta/2
\end{cases}\\[0.5ex]
&\le \begin{cases}\frac{1}{N}\frac{4c_2^2}{(r-\beta)(2r-\beta)^2}\left((2r-\beta) e^{(r-\beta)L_{\max}}-\beta\right) &\mbox{if } r\ne \beta/2,\beta\\[1ex]
\frac{1}{N}\frac{4c_2^2}{\beta^2}(\beta L_{\max} + 1) &\mbox{if } r= \beta\\[1ex]
\frac{1}{N}\frac{8c_2^2}{\beta^2} &\mbox{if } r = \beta/2\,.
\end{cases}
\end{align*}
On the other hand, the second term on the right-hand-side of \eqref{totvar} can be bounded as
\begin{align*}
\V\left[\Ev[\widehat{Q}^{\textnormal{CLMC}}_{L_{\max}}|L]\right] &= \frac{1}{N}\V\left[\int_0^{L\wedge L_{\max}} \frac{1}{\Py(L\ge \ell)}\Ev\left[\frac{\td Q(\ell)}{\td \ell}\right]\,\td\ell\right]\\
&\le \frac{1}{N}c_1^2\V\left[\int_0^{L\wedge L_{\max}} \frac{1}{\Py(L\ge \ell)}e^{-\alpha\ell}\,\td\ell\right]\\[1ex]
&= \begin{cases}
\frac{1}{N}\frac{c_1^2}{(r-\alpha)^2}\V\left[e^{(r-\alpha)L\wedge L_{\max}}-1\right] &\mbox{if } r\ne\alpha\\[0.5ex] 
\frac{1}{N}c_1^2\V\left[L\wedge L_{\max}\right] &\mbox{if } r=\alpha
\end{cases}\\[0.5ex]
&\le \begin{cases}
\frac{1}{N}\frac{c_1^2}{(r-\alpha)^2}\Ev\left[e^{2(r-\alpha)L\wedge L_{\max}}\right] &\mbox{if } r\ne\alpha\\[0.5ex] 
\frac{1}{N}c_1^2\V\left[L\right] &\mbox{if } r=\alpha
\end{cases}\\[0.5ex]
&= \begin{cases}
\frac{1}{N}\frac{c_1^2}{(r-2\alpha)(r-\alpha)^2}\left(2(r-\alpha)e^{(r-2\alpha)L_{\max}}-r\right) &\mbox{if } r\ne\alpha,2\alpha\\[1ex]
\frac{1}{N}\frac{2c_1^2}{\alpha}L_{\max} &\mbox{if } r=2\alpha\\[1ex]
\frac{1}{N}\frac{c_1^2}{\alpha^2} &\mbox{if } r=\alpha\,.
\end{cases}
\end{align*}
In both cases in the last step, we have again used our knowledge of the distribution of
$L$. 

Note that asymptotically the bound for the first
term on the right-hand-side of \eqref{totvar} always dominates the
bound of the second, since 
we have assumed that 
$\beta \le 2\alpha$. Hence, adding together 
the two bounds and using \eqref{boundLmax}, as well as the fact that
$\varepsilon < e^{-1}$, we obtain the following 
asymptotic bound on the total variance:
\[
\V[\widehat{Q}^{\textnormal{CLMC}}_{L_{\max}}] \; \le \;
\frac{C'}{N} \begin{cases}
\varepsilon^{\frac{\beta-r}{\alpha}} 
&\mbox{if } r > \beta\\
\log \varepsilon &\mbox{if } r=\beta\\
1 &\mbox{if } r < \beta\,,
\end{cases}
\]
for some constant $C'>0$ that is independent of $N$ and
$\varepsilon$. Thus, to guarantee 
$\V[\widehat{Q}^{\textnormal{CLMC}}_{L_{\max}}] \le \varepsilon^2/2$
it suffices to choose
\begin{equation}\label{N_proof}
N \ge 2C' \varepsilon^{-2-\max(0,\frac{r-\beta}{\alpha})}(\log\varepsilon)^{\delta_{r,\beta}}\,,
\end{equation}
where $\delta$ denotes the Kronecker delta.

Finally, we can bound the expected overall cost:
\begin{align}
\cC^{\textnormal{CLMC}}_{L_{\max}} &= N\Ev\left[\int_0^{L\wedge L_{\max}}\cC(\ell)\,d\ell\right] \notag\\
&= N\int_0^{L_{\max}}\cC(\ell)\Py(L\ge \ell)\,d\ell \notag\\
&\le Nc_3 \int_0^{L_{\max}}e^{\gamma\ell}\,\Py(L\ge \ell)\,d\ell \notag\\[0.5ex]
&= \begin{cases} N\frac{c_3}{\gamma-r}\left(e^{(\gamma-r)L_{\max}}-1\right) &\mbox{if } r\ne \gamma\\[0.5ex]
Nc_3\gamma L_{\max} &\mbox{if } r = \gamma\,.
\end{cases}\label{totcostbound}
\end{align}
Hence, using 
\eqref{N_proof} the overall cost can be bounded as
\begin{equation}
\cC^{\textnormal{CLMC}}_{L_{\max}} \le C \, \varepsilon^{-2 -
                                     \max(0,\frac{r-\beta}{\alpha}) -
                                     \max(0,\frac{\gamma-r}{\alpha})}(\log\varepsilon)^{\delta_{r,\beta}+\delta_{r,\gamma}}\,, \label{complex_result_proof}
\end{equation}
for some constant $C>0$, which is again independent of $\varepsilon$. This
completes the proof since we had assumed that $r\in[\min(\beta,\gamma),
\max(\beta,\gamma)]$ and so $\max(0,\frac{r-\beta}{\alpha}) +
                                     \max(0,\frac{\gamma-r}{\alpha}) = \max(0,\frac{\gamma-\beta}{\alpha})$.

\end{proof}

\subsection{Proof of Corollary \ref{coroll_thm}}
\begin{proof}
To prove (a), suppose $L_{\max}=+\infty$. Then, the bias in
\eqref{biasbound} is zero due to Corollary \ref{coroll_unbiased}, so
that the MSE is equivalent to the
variance of the CLMC estimator. Since $r < \beta$ it follows as in the
proof of Theorem \ref{thm_CLMC_giles} in Section \ref{proof_thm}, that 
\[ \V\left[\widehat{Q}^{\textnormal{CLMC}}_{\infty}\right] \le \frac{C'}{N}\,, \]
for some constant $C'>0$.
Analogously, since $r>\gamma$, the expected overall cost can be bounded by
\[ C^{\textnormal{CLMC}}_{\infty} \le C'' N\,, \]
for some constant $C''>0$. Therefore, we can bound the MSE with
$\varepsilon^2$ by taking $N \ge C' \varepsilon^{-2}$ and the overall
computational cost is $C^{\textnormal{CLMC}}_{\infty} = \cO\left(\varepsilon^{-2}\right)$.

To prove (b), suppose that the additional assumptions in part $(b)$ of
Corollary \ref{coroll_thm} hold. Then, by tracking back the steps in
the proof of Theorem \ref{thm_CLMC_giles} in Section \ref{proof_thm},
it can be seen fairly easily that for $\beta\le \eta\le \gamma$ we have  
\[ \Ev\left[\V[\widehat{Q}^{\textnormal{CLMC}}_{\infty}|L]\right] \ge \begin{cases}
\frac{1}{N}\frac{4c_2'^2}{\eta(\eta-r)} &\mbox{if } r<\eta, r\ne \eta/2\\
\frac{1}{N}\frac{8c_2'^2}{\eta^2} &\mbox{if } r=\eta/2\\
+\infty &\mbox{if } r\ge \eta\,,
\end{cases} 
\quad \text{and} \quad
\cC^{\textnormal{CLMC}}_{\infty} \ge \begin{cases}
N\frac{c_3'}{r-\eta} &\mbox{if } r>\eta\\
+\infty &\mbox{if } r\le \eta\,.
\end{cases} \] 
We see that $\MSE \times \cC^{\textnormal{CLMC}}_{\infty}=+\infty$ for
all choices of $r$.

\end{proof}

\section{Goal-Oriented Error Estimators}\label{sec:error_estimator}

We use a classical goal-oriented error estimator to drive the
sample-wise adaptive scheme in our numerical experiments. The
following description is taken from \cite{Gra05}. Let $\omega
\in \Omega$ be fixed, and recall that $u \in V$ denotes the solution
of \eqref{eqn:variational_problem} whilst $u_h \in V_h \subset V$ is
its finite element approximation on a grid $\mathcal T_h$. The error
in a quantity of interest (defined by a linear
functional\footnote{Similar error estimators can also be obtained for
  nonlinear functionals by first linearising about $\epsilon_h$.}) is
given by
\begin{equation}
\cQ (\epsilon_h) =  \cQ(u - u_h) = \cQ(u) - Q(u_h).
\end{equation}
This functional can be interpreted as the `source' of the finite element discretisation error in the quantity
of interest, and is a bounded linear functional on the dual space
$V'$. The key idea of goal-oriented, a posteriori error estimators is to relate $\cQ(\epsilon_{h})$
to the solution residual $r^u_h$, i.e we seek a function $w  \in V''$ such that $\cQ(\epsilon_h) = w(r^u_h)$.
Since $V$ is a reflexive Hilbert Space,
there exists a $w \in V$ such that $\cQ(\epsilon_h) = r^u_h(w)$. The function $w$, termed the {\em influence
function}, is the solution of the {\em dual problem} \begin{equation}
a(v,w) = \cQ(v) \quad \forall v \in V.
\end{equation}
This dual solution can be approximate using the same finite element approximation as $u_h$, i.e. find
$w_h \in V_h \subset V$ s.t $$a(v_h,w_h) = \cQ(v_h)\quad \forall v_h
\in V_h\,.$$

Using the Galerkin orthogonality of
$u$ and $u_h$, we can bound  $\cQ (\epsilon_h)$ as follows:
\begin{align}
|\cQ(\epsilon_h) | &= |\cQ(u-u_h)| = |a(u-u_h,w)|
= |a(u-u_h,w)| + |a(u-u_h,w_h)| \nonumber \\[1ex]
&=|a(u-u_h,w - w_h)| \leq \sum\nolimits_{\tau \in \mathcal T_h} \| u -
u_h \|_{a,\tau} \| w - w_h \|_{a,\tau}\,.
\label{eqn:QOIupperbound}
\end{align}
In the last step, we have used the Cauchy-Schwarz inequality
elementwise. Hence, the product of energy norms $\| u -
u_h \|_{a,\tau} \| w - w_h \|_{a,\tau}$ provides an estimate for the
element-wise contribution to the error in $Q(u_h)$. It is
now used to define an appropriate adaptivity scheme.

To estimate the error of the solutions of the primal and dual problem
in the energy norm on each element $\tau$, we use
explicit error estimators. We only show the main ideas for estimating
$\|u-u_h\|_{a,\tau}$ using one
of the most basic estimators. The bound for
$\|w-w_h\|_{a,\tau}$ can be derived analogously.
On each element~$\tau$, using integration by parts, the FE error can 
be represented as
\begin{align}
a(\epsilon_{h},v)|_\tau &=  \int_\tau f v\; \td\textbf x - \int_\tau
                       \nabla u_{h} \cdot   k(\textbf x) \nabla v
                       \; \td \textbf x \nonumber\\
&= \int_\tau \mathcal R_u v\; \td \textbf x +
  \int_{\partial\tau} \mathcal J_u v\; \td s \quad \forall v \in V\,,\label{error_rep}
\end{align}
where the residual error on the element is define by
\begin{equation}
\mathcal R_u(\textbf x) = \nabla \cdot\textbf k(\textbf x)\nabla u_h
(\textbf x)  + f (\textbf x) \quad \forall  \textbf x \in \tau,
\end{equation}
and where $\mathcal J_u$ defines,  for all $\textbf x \in \partial \tau$ (except at
the vertices), the jump of the flux in $u_h$ across the element boundary by
\begin{equation}
    \mathcal J_u(\textbf x) =
    \begin{cases}
      k(\textbf x) \Big[\textbf n_\tau(\textbf x) \cdot \nabla
        u_h|_\tau + \textbf n_{\tau'(\textbf x)}(\textbf x) \cdot \nabla u_h|_{\tau'(\textbf x)}\Big], & \quad \forall
\textbf x \not\in \partial D\,,
\\[0.5ex]
\textbf n_\tau(\textbf x) \cdot k(\textbf x) \nabla
        u_h|_\tau \,,& \quad \forall \textbf x \in \partial D \,,\\
    \end{cases}
\end{equation}
where ${\bf n}_\tau$ is the outward unit normal to the element
boundary $\partial\tau$ at $\textbf x$ and $\tau'(\textbf x)$ is the
neighbouring element of $\tau$ at $\textbf x$. For simplicity, we
assume that the boundary conditions are homogeneous Dirichlet
conditions on all of $\partial D$. 

Using again Galerkin orthogonality, we can introduce the global FE
interpolant $\mathcal I_h v$ in \eqref{error_rep}, and thus using classical
interpolation theory find that
\begin{align*}
a(\epsilon_{h},v)|_\tau & \le \|\mathcal R_u\|_{L^2(\tau)}
                          \|v-\mathcal I_h v\|_{L^2(\tau)} +
\|\mathcal J_u\|_{L^2(\partial \tau)} \|v-\mathcal I_h
                          v\|_{L^2(\partial \tau)}\\
& \le c_1 \underbrace{\left( h_\tau \|\mathcal R_u\|_{L^2(\tau)}
+ \sqrt{h_\tau} \|\mathcal J_u\|_{L^2(\partial \tau)}
  \right)}_{\textstyle =: \eta_\tau(u_h)}
  \|v\|_{a,\omega_\tau} 
\,,
\end{align*}
where $\omega_{\tau}$ denotes the subdomain of elements sharing a
common edge with $\tau$, and where $c_1$ is problem
dependent constant independent of the mesh size
$h_\tau$. Substituting $v = \epsilon_h$ and summing over all elements, 
we can see that (up to a constant factor $c_2$ depending on the geometry)
this leads to the explicit global
energy error estimator
\begin{equation}
\|\epsilon_h\|_a \leq c_1c_2 \left(\sum\nolimits_{\tau \in \mathcal{T}_h}
\eta^2_\tau(u_h) \right)^{1/2}
\label{eqn:errorEstimator}
\end{equation}
for the primal solution on $\mathcal T_h$.

The local error contribution $\eta_\tau(w_h)$ to the dual solution
$w_h$ on $\tau$ in the energy norm can be estimated analogously, and
it can be shown that together with \eqref{eqn:QOIupperbound} this
leads to the goal-oriented error estimator 
\begin{equation}
|\mathcal Q(\epsilon_h)|\le c_3 \sum\nolimits_{\tau \in \mathcal{T}^{(k)}}
\eta_\tau(u_h) \eta_{\tau}(w_h) \,,
\label{eqn:GOest}
\end{equation}
which is again explicit up to the unknown constant $c_3$. 
 Although the exact constants in all the described estimators are not
known, the relative error with respect to a coarsest reference mesh
can still be used to drive a goal-oriented mesh adaptivity procedure, 
as described in Section \ref{sec:model}. 

More sophisticated error estimators exist, including estimators where
the constants are known or can be computed explicitly (see
e.g.~\cite{Gra05} for more details), but in our numerical experiments
we used the estimator described above.




\addcontentsline{toc}{section}{Bibliography}
\printbibliography
\nocite{*}

\end{document}

